\newtheorem{thm}{Theorem}[section]
\newtheorem{prp}[thm]{Proposition}
\newtheorem{lem}[thm]{Lemma}
\theoremstyle{definition}
\newtheorem{definition}[thm]{Definition}
\DeclareMathOperator{\codim}{codim}
\DeclareMathOperator{\im}{im}
\DeclareMathOperator{\Pic}{Pic}
\begin{document}

\title{Smooth rational surfaces of $d=11$ and $\pi=8$ in $\mathbb{P}^5$.}

\author{Abdul Moeed Mohammad}
\address{Matematisk institutt, Universitetet i Oslo, PB 1053, Blindern, NO-0316 Oslo, Norway.}
\email{abdulmm@math.uio.no}

\subjclass[2000]{Primary 14J26. Secondary: 14N30}

\keywords{Rational surfaces, Speciality, Adjunction}

\date{April 5, 2013}

\begin{abstract}
We construct a linearly normal
smooth rational surface $S$ of degree $11$ and sectional genus $8$ in the projective fivespace. Surfaces
satisfying these numerical invariants are special, in the sense that $h^1(\mathscr{O}_S(1))>0$. Our construction
is done via linear systems and we describe the configuration of points blown up in the projective plane. Using the theory of adjunction mappings, we present a short list of linear systems whom are the only possibilities for other families of surfaces with
the prescribed numerical invariants.
\end{abstract}

\maketitle

\section{Introduction}

A result due to Arrondo, Sols and Pedreira \cite{4} states that there
are finitely many components of the Hilbert scheme corresponding to smooth surfaces of non-general type, i.e. Kodaira dimension $<2$, in $\mathbb{G}(1,3)$. Since the Pl$\ddot{\text{u}}$cker embedding embeds $\mathbb{G}(1,3)$ as a smooth quadric into $\mathbb{P}\left(\wedge^2 V\right)\simeq\mathbb{P}^5$, it follows that there are finitely many components of the Hilbert scheme corresponding to smooth surfaces of non-general type contained within a smooth quadric in $\mathbb{P}^5$. The latter idea led Papantonopoulou, Verra, Arrondo, Sols, and Gross to
classify smooth surfaces of non-general type with degree $\leq 10$ contained within smooth quadrics in $\mathbb{P}^5$. For a complete classification, see \cite{3} and \cite{4} for degree $\leq 9$  and \cite{10} for degree $10$.

A continuation of this classification is to study smooth surfaces of degree $11$ in $\mathbb{P}^5$ of non-general type. This paper focuses on a certain class of surfaces of non-general type, namely rational surfaces. Rational surfaces are birational to the projective plane $\mathbb{P}^2$ and all rational surfaces can be obtained by blowing up $\mathbb{P}^2$ or the Hirzebruch surfaces $\mathbb{F_e}$ in a finite number of points. Therefore, rational surfaces can be described by the geometry of the points blown up in $\mathbb{P}^2$ or $\mathbb{F}_e$.

In this paper, we study rational surfaces $S$ of degree $11$ blown up in points that are not in general position, so-called special rational surfaces, i.e. $h^1(\mathscr{O}_S(1))>0$. A simple application of Riemann-Roch yields that the sectional genus must be $\geq 8$. On the other hand, another application of Riemann-Roch tells us that if the sectional genus $>8$, then the hyperplane intersection lies on a quadric. We are interested in surfaces that do not interfere with \cite{10}. Therefore, we study smooth rational surfaces $S\subset\mathbb{P}^5$ of degree $11$ with sectional genus $8$ in this paper.

The study of rational surfaces is a classical topic of algebraic geometry, but was revived in the late 1980s due to a result by Ellingsrud and Peskine \cite{9} that states that finitely many components of the Hilbert scheme correspond to rational surfaces in $\mathbb{P}^4$. This resulted in a renewed interest for the classification of surfaces in $\mathbb{P}^4$, see the introductions of \cite{6} and \cite{12}, and led to the development of techniques for construction of rational surfaces. Three such techniques are Beilinson monad e.g. \cite{1},  linkage e.g. \cite{12} and complete linear systems e.g. \cite{8}.

Our point of view is the study of complete linear systems. The complete linear systems are obtained by some results on the adjunction mapping, due to Sommese and Van de Ven \cite{13}. Recall that every smooth surface can be embedded into $\mathbb{P}^5$ through generic projection. One consequence of this is that we have cannot expect similar relations between the invariants in $\mathbb{P}^5$ as one would in $\mathbb{P}^4$, so we obtain more complete linear systems then we would with in $\mathbb{P}^4$ through the adjunction mappings.

Our main result is the following:

\begin{thm}\label{mainT} There exists a family of linearly normal smooth rational surfaces in $\mathbb{P}^5$ with degree $11$ and sectional genus $8$. Each surface in the family is isomorphic to $\mathbb{P}^2$
blown up in $17$ points and the embedding complete linear system is 
\begin{eqnarray*}
|7L-2E_1-\hdots -2E_7-E_8-\hdots -E_{17}|
\end{eqnarray*}
where $L$ is the pullback of a line in $\mathbb{P}^2$ and $E_i$ are the exceptional curves of the blowup. Conversely, every linearly normal smooth rational
surface in $\mathbb{P}^5$ with degree $11$ and sectional genus $8$ has $$-10\leq K_S^2\leq -7$$ and the embedding complete linear systems is one of the following:\\
\\
\begin{tabular}{l l l}
(1). & $|7L-2E_1-\hdots -2E_7-E_8-\hdots -E_{17}|$ &\\
(2). & $|9L-3E_1-\hdots -3E_6-2E_7-2E_8-E_9-\hdots -E_{16}|$, &\\
(3). & $|4B+(4-2e)F-2E_1-E_2-\hdots -E_{18}|$, & where $e\leq 2$.\\
(4). & $|4B+(5-2e)F-2E_1-\hdots -2E_4-E_5-\hdots -E_{17}|$, & where $e\leq 3$.\\
(5). & $|4B+(6-2e)F-2E_1-\hdots -2E_7-E_8-\hdots -E_{16}|$, & where $e\leq 5$.
\end{tabular}\\
\\
where $B$ is a section with self-intersection $B^2=e$ on $\mathbb{F}_e$, $F$ is a ruling on $\mathbb{F}_e$, $L$ is the pullback of a line in $\mathbb{P}^2$ and $E_i$ are the exceptional curves of the blowup.
\end{thm}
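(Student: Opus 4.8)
The plan is to prove the two directions separately: exhibit the family $(1)$ by a direct linear-systems computation, and obtain the exhaustive list by reducing an arbitrary such $S$ through the adjunction mapping to a minimal rational surface and then reversing the reduction.

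I would begin by recording the numerical data forced by the hypotheses. Rationality gives $\chi(\mathscr{O}_S)=1$ and $p_g=q=0$; the genus formula $2\pi-2=H^2+H\cdot K_S$ with $d=H^2=11$, $\pi=8$ yields $H\cdot K_S=3$, and Riemann--Roch gives $\chi(\mathscr{O}_S(1))=1+\tfrac12(H^2-H\cdot K_S)=5$. Since $H$ is very ample, $h^2(\mathscr{O}_S(1))=h^0(\mathscr{O}_S(K_S-H))=0$, so linear normality in $\mathbb{P}^5$ (that is, $h^0=6$) is equivalent to the speciality $h^1(\mathscr{O}_S(1))=1$. By Castelnuovo's theorem the minimal model of $S$ is $\mathbb{P}^2$ or a Hirzebruch surface $\mathbb{F}_e$, so $K_S^2=9-n$ or $K_S^2=8-n$ according to the number $n$ of points blown up; bounding $K_S^2$ is the same as bounding $n$.

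For the classification I would run the adjunction mapping. A Riemann--Roch computation gives $h^0(K_S+H)=1+\tfrac12(K_S+H)\cdot H=8$, and the image surface has $(K_S+H)^2=K_S^2+17$ and sectional genus $K_S^2+11$. After verifying that $(S,H)$ is none of the exceptional pairs of Sommese and Van de Ven \cite{SV87}---excluded by $d=11$, $\pi=8$---the map $\varphi_{|K_S+H|}$ is birational onto a smooth surface $S_1\subset\mathbb{P}^7$, contracting exactly the $(-1)$-lines; iterating at most once more terminates at a minimal pair which is $(\mathbb{P}^2,\mathscr{O}(1))$, a Del Pezzo surface $(S',-K_{S'})$, a scroll, or a conic bundle over $\mathbb{P}^1$. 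Since the terminal invariants are rigid, matching them against the fixed numbers $d=11$, $\pi=8$ and imposing both the speciality $h^1(\mathscr{O}_S(1))=1$ and very ampleness of $H$ leaves only finitely many reduction patterns and pins the range to $-10\le K_S^2\le -7$: below this range the adjoint sectional genus $K_S^2+11$ drops to $0$ and the terminal model degenerates to a scroll or Veronese incompatible with reversal to $\pi=8$, while above it the configurations are forced to be non-special. The four surviving values of $K_S^2$, together with the two admissible minimal models at $K_S^2=-8$, give exactly the five patterns.

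It then remains to reconstruct $H$ and to prove very ampleness. Since $\varphi^*H_1=K_S+H$, the coefficient of each exceptional divisor in $K_S+H$ detects whether it is contracted, and one recovers $H=\varphi^*H_1-K_S$ on the blow-up dictated by $K_S^2=K_{S'}^2-k$; carrying this out in each case produces the displayed systems, the $\mathbb{P}^2$-reductions giving $(1)$ and $(2)$ and the $\mathbb{F}_e$-reductions giving $(3)$--$(5)$. For the existence of $(1)$ I would realize $S$ as $\mathbb{P}^2$ blown up at $17$ points chosen so that the seventeen base points fail by exactly one to impose independent conditions on plane septics with the prescribed multiplicities, forcing $h^0(H)=6$ and $h^1(H)=1$, and then check very ampleness by the separation-of-points-and-tangents criterion. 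For families $(3)$--$(5)$ the stated ranges $e\le 2,3,5$ are exactly the positivity thresholds under which $H$ stays very ample, the coefficient of $F$ being permitted to descend further below $0$ as more double base points are introduced. I expect this final very-ampleness analysis on the blown-up $\mathbb{F}_e$, together with the exclusion of every multiplicity vector outside the five listed, to be the main obstacle, since it is where the numerical adjunction bookkeeping must be promoted to an actual statement about embeddings.
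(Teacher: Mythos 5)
Your skeleton matches the paper's (numerical setup, adjunction mapping for the converse, explicit construction for case (1)), and your bookkeeping is right: $H\cdot K_S=3$, $\chi(\mathscr{O}_S(H))=5$, adjoint image in $\mathbb{P}^7$ with sectional genus $K_S^2+11$. But the three places where you defer the work are exactly where the paper's content lies, and two of your shortcuts are wrong. First, the claim that the adjunction process ``iterates at most once more'' to a minimal pair is false: termination is governed by the condition $\pi_i\le 5$, and since a priori one only has $-11\le K_0^2\le -1$, the paper must track adjunction sequences of length up to $N_0=7$ (the case $K_0^2=-1$); enumerating all admissible tuples $(K_0^2,\dots,K_{N_0}^2)$ is the bulk of Part 1 of Theorem \ref{kuttThm}. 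Second, and more seriously, you give no mechanism for the assertion that outside $-10\le K_S^2\le -7$ ``the configurations are forced to be non-special.'' Speciality and very ampleness do not separate the candidates by numerology alone: the type $[9;3^5,2^5,1^5]$ (degree $11$, genus $8$, $K_S^2=-6$) must be excluded while $[9;3^6,2^2,1^8]$ survives as case (2), and nothing in your outline tells them apart. The paper's device is to exhibit, for each of roughly thirty candidate classes, a decomposition $H\equiv A+B$ such that $h^1(\mathscr{O}_S(H))=1$ forces $A$ to be effective (Lemma \ref{effLem}), after which $p_a(A)$ and $H\cdot A$ violate the bound $H\cdot C\ge 2p_a(C)+1$ for curves of genus $\le 2$ (Proposition \ref{lowgenPrp}); two stubborn cases need the separate lifting arguments of Propositions \ref{liftEx1} and \ref{liftEx2}. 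Some argument of this kind is indispensable, and your proposal contains none.

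Third, on existence: choosing $17$ points failing by exactly one to impose independent conditions on the septics does give $h^0=6$, $h^1=1$, but ``check very ampleness by separation of points and tangents'' is not a workable plan, because speciality forces the points into special position and it is precisely this that threatens very ampleness. The paper resolves the tension with a concrete configuration --- five general points plus twelve points on the intersection of a quartic and a sextic sharing tangent directions at two of them (conditions O1--O6 and C1) --- so that $H\equiv A+C$ with $\mathscr{O}_A(H)\simeq\omega_A$, which produces the speciality, and then proves very ampleness by the Alexander--Bauer lemma (Lemma \ref{alexbau}) combined with the curve-embedding theorem (Theorem \ref{curvembd}), checking $H$ restricted to every member of $|A|$, reducible ones included. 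You correctly flag this as the main obstacle, but as written your proposal is a plan whose hard steps are all missing rather than a proof.
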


In Section \ref{constt} we prove the existence of a smooth rational surface in $\mathbb{P}^5$ with the prescribed numerical invariants. The reader may refer to the proof of Theorem \ref{constEx1} for the details of the special configuration of the points blown up in $\mathbb{P}^2$. In \cite{4}, appendix to section 4, Arrondo and Sols outline an incomplete example of this surface without details. Our construction verifies their conjecture that there exists a quartic and sextic passing through the points blown up. Beside the construction of (1) in Theorem \ref{mainT}, Abo and Ranestad (unpublished) have used Macaulay2 to show that case (2) of Theorem \ref{mainT} is in the linkage class of a singular quadric surface and two smooth cubic surfaces in $\mathbb{P}^5$.

In Section \ref{liftSect} we give two examples of complete linear systems both of which cannot be both very ample and have six global sections
simultaneously, by lifting sections of curves on the surface to global sections on the surface. 

In Section \ref{adjSect} we obtain a finite list of complete linear systems
satisfying the numerical invariants of the surfaces in question using a result on adjoint linear systems, due to Sommese and Van de Ven.  Then we shorten the list to the five complete linear systems depicted in Theorem \ref{mainT} by making use of a result on curves of low degree, due to Catanese and Franciosi, to obtain contradictions and the lifting examples in Section \ref{liftSect}.

\section{Preliminaries}\label{preSec}

Throughout this paper we work over an algebraically closed field $k$ with characteristic $0$ and $S$ denotes a smooth rational projective surface over $k$. We use the following shorthand notation:\\
\begin{tabular}{l l l}
& &\\
$\pi_S$ & = & genus of a general hyperplane section of $S$.\\
$p_a(D)$ & = & arithmetic genus of a $D$ divisor on a curve or a surface.\\
$\tilde{\mathbb{P}}^2(x_1,\hdots, x_r)$ & = &  the projective plane blown up at the points $x_1,\hdots, x_r\in\mathbb{P}^2$.\\
$h^1(\mathscr{O}_X(D))$ & = & speciality of a divisor $D$ on a curve $X$ or a surface $X$ , i.e. $\dim H^1(X,\mathscr{O}_X(D))$.\\
$\mathbb{F}_e$ & = & Hirzebruch surfaces, i.e. $\mathbb{P}_{\mathbb{P}^1}(\mathscr{O}_{\mathbb{P}^1}\oplus \mathscr{O}_{\mathbb{P}^1}(e))$ with $e\geq 0$.\\
& &\\
\end{tabular}

Let $X$ be either a curve or a surface and let $D$ be a divisor on $X$. We will say that the divisor $D$ is \textit{special} on $X$ if the speciality is positive. Otherwise, we will say that $D$ is \textit{non-special}. A component $D'$ of a divisor $D$ on a surface $X$ and will be denoted $D'\subset D$.

The minimal models of rational surfaces are $\mathbb{P}^2$ and $\mathbb{F}_e$, where $e\neq 1$, and if $\pi$ is a birational morphism from $S$ to a minimal model then $\pi$ is a finite composition of blow ups centered at $x_1,\hdots, x_r\in\mathbb{P}^2$, and
$$\Pic(S)\simeq \begin{cases} \mathbb{Z}L\oplus ZE_1\oplus \hdots \oplus\mathbb{Z}E_r, &  \text{if }\mathbb{P}^2\text{ is the minimal model}\\
\mathbb{Z}B\oplus \mathbb{Z}F\oplus ZE_1\oplus \hdots \oplus\mathbb{Z}E_r, &  \text{if }\mathbb{F}_e\text{ is the minimal model}.
\end{cases}$$
\noindent where $E_i=\pi^{-1}(x_i)$ is an exceptional divisor on $S$, $L=\pi^*l$ for some line $l\subset \mathbb{P}^2$, $B$ is the class of a section with $B^2=e$ and $F$ is a fiber on the ruling.

The following form of the adjunction formula and Riemann-Roch will be useful.

\begin{lem}\label{binomLem} Let $S \simeq \tilde{\mathbb{P}}^2(x_1,\hdots, x_r)$ and let $\mathscr{O}_S(D)\in \Pic(S)$. The following are true:

\hskip55pt (1). Suppose $\mathbb{P}^2$ is a minimal model for $S$ and let $D\equiv aL-\sum b_iE_i$. Then:
\begin{eqnarray*}
p_a(D) & =& \tbinom{a-1}{2}-\sum \tbinom{b_i}{2}\\
\chi(\mathscr{O}_S(D)) & =& \tbinom{a+2}{2}-\sum \tbinom{b_i+1}{2}.
\end{eqnarray*}
\hskip65pt (2). Suppose $\mathbb{F}_e$ is a minimal model for $S$ and let $D\equiv aB+bF-\sum c_iE_i$. Then:
\begin{eqnarray*}
p_a(D) & =& (a-1)(b-1)+e\tbinom{a}{2}-\sum \tbinom{c_i}{2}\\
\chi(\mathscr{O}_S(D)) & =& (a+1)(b+1)+e\tbinom{a+1}{2}-\sum \tbinom{c_i+1}{2}.
\end{eqnarray*}
\end{lem}
\begin{proof} 
Recall that in (1) we have $K_S \equiv -3L+\sum E_i$ and in (2) we have $K_S\equiv -2B+(e-2)F+\sum E_i$.
The results about $p_a(D)$ follows from taking degree of the adjunction formula $\omega_D\simeq \mathscr{O}_D(D+K_S)$, Theorem 1.6.3 in \cite{5}. The results about $\chi(\mathscr{O}_S(D))$ follows directly from Riemann-Roch, Theorem V.1.6 in \cite{11}, combined with $\chi(\mathscr{O}_S)=1$. \end{proof}

By the \textit{type} of a divisor class we mean the short hand notations 
$$[a;\text{max}\{b_i\}^{u_0},\hdots ,\text{min}\{b_i\}^{u_v}]:=aL-b_1E_1-\hdots -b_rE_r$$
$$[(a,b);\text{max}\{c_i\}^{u_0},\hdots ,\text{min}\{c_i\}^{u_v}]:=aB+bF-c_1E_1-\hdots -c_rE_r$$ respectively, where $\sum_0^v u_i=r$ and $u_j=\#\{k\:|\: b_k=\text{max}\{b_j\}-j\}$. For instance, if $S \simeq \tilde{\mathbb{P}}^2(x_1,\hdots, x_r)$  and $\mathbb{P}^2$ is the minimal model for $S$, then the type of the anticanonical divisor class $|-K_S|$ is $[3;1^r]$.\\

\subsection{Preliminaries about very ample line bundles on smooth surfaces}
To construct a surface $S$ we seek line bundles $\mathscr{L}$ that are very ample on $S$. A versatile result, due to Alexander and Bauer, provides us with sufficient conditions for $\mathscr{L}$ to be very ample. We state their precise result.

\begin{lem}[Alexander-Bauer]\label{alexbau} An effective line bundle $\mathscr{O}_S(H)\simeq\mathscr{O}_S(D_1+D_2)$ on a smooth surface $S$ is very ample, if each one of the following is true:\\
\begin{tabular}{l l l}
(1). & $h^0(\mathscr{O}_S(D_i))\geq 2$, & for some $i$.\\
(2). & $\mathscr{O}_{D}(H)$ is very ample, & for all $D\in |D_1|\cup |D_2|$.\\
(3). & $H^0(\mathscr{O}_S(H))\longrightarrow H^0(\mathscr{O}_{D}(H))$ is surjective, & for all $D\in |D_1|\cup |D_2|$.
\end{tabular}
\end{lem}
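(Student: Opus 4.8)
The plan is to reduce very ampleness of $\mathscr{O}_S(H)$ to a separation statement for length-two subschemes and then to propagate separation from the curves in $|A_1|\cup|A_2|$ to all of $S$. Recall that $\mathscr{O}_S(H)$ is very ample if and only if for every zero-dimensional subscheme $Z\subset S$ of length two (two distinct points, or a tangent vector at a single point) the restriction map $H^0(\mathscr{O}_S(H))\to H^0(\mathscr{O}_Z(H))$ is surjective. As a warm-up, base-point freeness of $|H|$ already follows from hypotheses (2) and (3): through any point $x$ there is a member $A\in|A_1|$ with $x\in A$, since $\dim|A_1|\geq 1$ by (1); the composite $H^0(\mathscr{O}_S(H))\to H^0(\mathscr{O}_A(H))\to \mathscr{O}_A(H)|_x$ is then surjective, the first map by (3) and the second because $\mathscr{O}_A(H)$ is very ample, hence base-point free, on $A$ by (2). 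After relabelling we may and do assume the index in (1) is $i=1$, so that $|A_1|$ is at least a pencil.

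The core mechanism is a two-step factorisation. Suppose first that $Z$ is contained in some member $A\in|A_1|\cup|A_2|$, say $A\in|A_1|$ for definiteness. The standard exact sequence
\[
0\longrightarrow \mathscr{O}_S(A_2)\xrightarrow{\;\cdot s_A\;}\mathscr{O}_S(H)\xrightarrow{\;\rho\;}\mathscr{O}_A(H)\longrightarrow 0,
\]
in which $s_A$ cuts out $A$ and $H-A\sim A_2$, shows via (3) that $\rho$ is surjective on global sections; since $Z\subset A$ and $\mathscr{O}_A(H)$ is very ample by (2), the further restriction $H^0(\mathscr{O}_A(H))\to H^0(\mathscr{O}_Z(H))$ is surjective. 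Composing, $H^0(\mathscr{O}_S(H))\to H^0(\mathscr{O}_Z(H))$ is surjective and $Z$ is separated. Thus the only genuine work is for a length-two $Z$ lying on no single member of $|A_1|$ or $|A_2|$.

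To treat that case I would exploit the pencil supplied by (1) together with the kernel $\mathscr{O}_S(A_2)$ of the sequence above as a source of adjustments. Pick a point $p$ in the support of $Z$ and a member $A\in|A_1|$ through $p$; since $Z\not\subset A$ one has $Z\cap A=\{p\}$ scheme-theoretically. By (2) and (3) I can prescribe the value of a section at $p$, lifting from $H^0(\mathscr{O}_A(H))$, while the kernel sections $s_A\cdot w$ with $w\in H^0(\mathscr{O}_S(A_2))$ vanish at $p$ and take the value $s_A(q)\,w(q)$ at the second point $q$ of $Z$, with $s_A(q)\neq 0$. Hence I can correct the value at $q$ without disturbing $p$, and separation follows provided $q$ is not a base point of $|A_2|$. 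The hard part is exactly this base-point issue: when $h^0(\mathscr{O}_S(A_2))=1$ the unique member of $|A_2|$ contributes base points. The resolution is that at most one point of $Z$ can be a base point of $|A_2|$, for if both were then every member of $|A_2|$ would contain $Z$ and we would be back in the factorisation case; so, using that $|A_1|$ is a pencil, I can instead route the argument through a member of $|A_1|$ passing through the offending point and adjust at the other point, which is now free. The tangent-vector case is handled by the same lifting-and-adjusting argument applied to a length-two scheme supported at one point. Keeping track of which system is base-point free at which point is the only delicate bookkeeping; conceptually everything rests on the pencil from (1) providing members through a prescribed point and on the kernel $\mathscr{O}_S(A_2)$ providing enough freedom to fix the value at the remaining point.
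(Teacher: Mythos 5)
The paper does not actually prove this lemma; it cites Proposition 5.1 of \cite{CF93}, so your attempt must be measured against the standard Alexander--Bauer argument, and your overall strategy is exactly that argument: reduce very ampleness to surjectivity of $H^0(\mathscr{O}_S(H))\to H^0(\mathscr{O}_Z(H))$ for length-two subschemes $Z$, dispose of any $Z$ lying on a member of $|A_1|\cup |A_2|$ by composing the surjection in (3) with the separation coming from (2), and otherwise combine a lifted section that is nonzero at one point with a ``kernel'' section $s_A\cdot w$, $w\in H^0(\mathscr{O}_S(A_2))$, to adjust the value at the other point. (Note that for this mechanism to exist at all one must read the hypotheses as saying that $A_1$ and $A_2$ are both effective, i.e.\ $|A_2|\neq\emptyset$, which is how \cite{CF93} states the result.) For $Z$ consisting of two distinct points your case analysis is complete and correct: if both points were base points of $|A_2|$, then every member of $|A_2|$ would contain the \emph{reduced} scheme $Z$, returning you to the factorisation case.

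The gap is the tangent-vector case, which you dismiss as ``the same argument.'' It is not, precisely because your escape route uses reducedness of $Z$: if $Z$ is a length-two scheme supported at a single point $p$, then a member of $|A_2|$ through $p$ need \emph{not} contain $Z$, and there is no ``other point'' at which to adjust. Concretely, suppose $p$ is a base point of both $|A_1|$ and $|A_2|$ and no member of either system contains $Z$. Then every adjusting section you have is useless: for $A\in|A_1|$ and $w\in H^0(\mathscr{O}_S(A_2))$ the differential of $s_A w$ at $p$ is $w(p)\,ds_A+s_A(p)\,dw=0$, and symmetrically for $s_D v$ with $D\in|A_2|$, $v\in H^0(\mathscr{O}_S(A_1))$; lifts of sections of $\mathscr{O}_A(H)$ vanishing at $p$ have uncontrolled derivative in the direction of $Z$ transverse to $A$. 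So your bookkeeping cannot close this case. The missing idea is that this case collapses back into the factorisation case: if $p\in \mathrm{Bs}\,|A_1|$ and $s_0,s_1$ span a pencil in $H^0(\mathscr{O}_S(A_1))$, then both $s_i$ restrict into $\mathfrak{m}_Z\mathscr{O}_Z\simeq k$, a one-dimensional space, so some nontrivial combination $\mu_0 s_0+\mu_1 s_1$ restricts to zero on $Z$; that member of $|A_1|$ contains $Z$, and (2)+(3) finish as before. Equivalently: at a base point of a pencil, a length-two scheme imposes only one further condition, so it always lies on a member. Without this (short but genuinely separate) argument, your proof does not establish separation of tangent vectors at common base points of the two systems.
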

\begin{proof}See Proposition 5.1 in \cite{6}. \end{proof}

The idea behind Alexander and Bauer's result is that if $\mathscr{L}$ restricts to a very ample line bundle on a suitable family of
curves on $S$ then $\mathscr{L}$ is itself very ample on $S$, given some minor assumptions. This allows us to answer
the question of $\mathscr{L}$ being very ample on $S$ by answering the question of $\mathscr{L}$ being very ample on some
curves on $S$. Note that the Alexander-Bauer lemma does not give any information on how 
to determine if $\mathscr{L}$ restricts to a very ample line bundle on curves. 

Recall that
it follows from Riemann-Roch that $\mathscr{L}$ is a very ample line bundle on an irreducible curve
$C$ if $\deg (\mathscr{L}\otimes \mathscr{O}_C)\geq 2p_a(C)+1$. Catanese, Franciosi, Hulek and Reid have generalized
the latter into a result which is true for both irreducible and reducible curves. The part of their result which we
will be using is the following. Note that all curves on a smooth surface $S$ are generically Gorenstein.

\begin{thm}[Curve embedding]\label{curvembd} Let $H$ be a divisor on a curve $C$ (possibly reducible and nonreduced). Then $\mathscr{O}_C(H)$ is very ample whenever $H.D\geq 2p_a(D)+1$, for every generically Gorenstein component $D\subset C$.
\end{thm}
\begin{proof}See Theorem 1.1 in \cite{7}. \end{proof}

Let $H$ be a very ample divisor on a $S$. Recall that the degree of a curve $C$ on $S$ is
$H.C>0$. Catanese and Franciosi have improved the lower bound of $H.D$ when $D$ is an effective divisor of small arithmetic genus. We state
their result.

\begin{prp}\label{lowgenPrp} 
Suppose $H$ is a very ample divisor on a smooth surface $S$. Then every effective divisor $D$ on $S$ with arithmetic genus $p_a(D)\leq 2$ has degree $H.D\geq 2p_a(D)+1$. In particular, if the degree $H.D\leq 3$ then the arithmetic genus $p_a(D)\leq 1$.
\end{prp}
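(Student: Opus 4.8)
The plan is to bound $H.C$ from below by a case analysis on $g:=p_a(C)$, exploiting the fact that very ampleness of $H$ makes $\mathscr{O}_C(H)$ a very ample line bundle on $C$ and then reading off numerical constraints from Riemann--Roch and Clifford's theorem on $C$ itself. Write $d:=H.C$ and let $r:=h^0(\mathscr{O}_C(H))-1$ be the dimension of the linear span of $C$ in the embedding. Riemann--Roch on $C$, with its dualizing sheaf, reads $h^0(\mathscr{O}_C(H))-h^1(\mathscr{O}_C(H))=d-g+1$, so that $r=d-g+h^1(\mathscr{O}_C(H))$. The case $g=0$ is immediate, since an effective curve meets the very ample divisor $H$ positively, giving $d\geq 1=2g+1$. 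I may therefore assume $g\in\{1,2\}$ and derive a contradiction from the assumption $d\leq 2g$.

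I would then split according to whether $\mathscr{O}_C(H)$ is special. If $h^1(\mathscr{O}_C(H))=0$, then $r=d-g\leq g\leq 2$, so $C$ spans at most a $\mathbb{P}^2$. Here I invoke the classification of curves of small span: a curve with $r=1$ is a line and has $g=0$, while a pure one–dimensional subscheme with $r=2$ is a plane divisor of degree $d$ and hence has $g=\binom{d-1}{2}$. Matching these against $r=d-g$ rules out every remaining possibility with $g\in\{1,2\}$ and $d\leq 2g$; for instance $r=2$ forces $d=g+2$, so $g=\binom{g+1}{2}$, whose only solution with $g\leq 2$ is $g=1$, $d=3$, which violates $d\leq 2g$.

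If instead $h^1(\mathscr{O}_C(H))>0$, then $\mathscr{O}_C(H)$ is special and Clifford's theorem gives $r\leq d/2$. Combining this with $r=d-g+h^1(\mathscr{O}_C(H))\geq d-g+1$ yields $d-g+1\leq d/2$, that is $d\leq 2g-2$; for $g\leq 2$ this forces $d\leq 2$, whence $g=0$, contradicting $g\geq 1$. This completes the bound $d\geq 2g+1$ for $g\leq 2$. The ``in particular'' clause then falls out of the same low–degree bookkeeping: curves with $d\leq 3$ are lines, conics, and cubics, all of arithmetic genus at most $1$, so $H.C\leq 3$ forces $p_a(C)\leq 1$.

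The main obstacle is that $C$ need be neither reduced nor irreducible and may even fail to be connected, so that the naive Riemann--Roch and, especially, Clifford statements do not apply verbatim. The real work is therefore to run Riemann--Roch with the dualizing sheaf and to establish Clifford's inequality in the generalized form valid for arbitrary effective one–dimensional subschemes of a smooth surface, once the numerical–connectedness hypotheses guaranteed by very ampleness are in place; this is precisely the Catanese--Franciosi--Hulek--Reid machinery underlying Theorem \ref{curvembd}. A secondary point to check is the small–span classification for such possibly non–reduced, reducible $C$, which is where the separation of points and tangent vectors forced by very ampleness of $\mathscr{O}_C(H)$ is used.
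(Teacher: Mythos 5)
The paper does not actually prove this proposition: its ``proof'' is the citation of Proposition 5.2 in \cite{CF93}, so your argument has to stand entirely on its own. Its non-special half does stand: when $h^1(\mathscr{O}_C(H))=0$, Riemann--Roch plus the bound $\dim\Span (C)\leq h^0(\mathscr{O}_C(H))-1$ (only an inequality, since $H^0(\mathscr{O}_{\mathbb{P}^N}(1))\to H^0(\mathscr{O}_C(H))$ need not be surjective, but it is the direction you use) forces $C$ to be either all of a line, hence of genus $0$, or a plane curve of degree $d$ with $g=\binom{d-1}{2}$, and that arithmetic rules out every case with $g\in\{1,2\}$ and $d\leq 2g$.

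The genuine gap is the special case, exactly where you defer to ``the CFHR machinery''. You need Clifford's inequality $h^0(\mathscr{O}_C(H))-1\leq d/2$ for an arbitrary effective divisor $C$ on $S$, and you assert that the required numerical connectedness is ``guaranteed by very ampleness''. It is not: very ampleness of $H$ on $S$ imposes no connectedness whatsoever on a curve $C\subset S$, which may be a disjoint union, and for disconnected curves Clifford's inequality is false even for very ample special line bundles. Concretely, if $C=Q\sqcup\ell$ with $Q$ a canonically embedded plane quartic ($\mathscr{O}_Q(1)\simeq\omega_Q$) and $\ell$ a disjoint line, then $d=5$ and $h^1(\mathscr{O}_C(1))=1>0$, yet $h^0(\mathscr{O}_C(1))=3+2=5>d/2+1$. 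Note that this $C$ has $p_a=2$ (disjoint unions satisfy $p_a(C_1\sqcup C_2)=p_a(C_1)+p_a(C_2)-1$) and $d=5=2p_a+1$, so it is no counterexample to the proposition, only to the inequality your proof rests on. Nor can you restrict Clifford to the range $g\leq 2$, $d\leq 2g$, $h^1>0$ that you actually use: the proposition asserts precisely that this range is empty, so any such restricted invocation is circular. The honest repair is to reduce to $2$-connected (or at least numerically connected) curves by inducting over decompositions $C=C_1+C_2$ with $C_1.C_2\leq 0$; but, as the example shows, the pieces can have larger genus than $C$ itself ($Q$ has genus $3$), so the induction leaves the hypothesis $p_a\leq 2$ and requires a statement valid for all genera --- which is the actual content of Catanese--Franciosi's proof, not a routine verification. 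The same missing step also underlies your final sentence: $d\leq 3\Rightarrow p_a\leq 1$ is not a formal consequence of the bound for $p_a\leq 2$, since one must also exclude curves with $p_a\geq 3$ and $d\leq 3$, and in the special case your only tool for those is again the unjustified Clifford inequality.
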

\begin{proof}See Proposition 5.2 in \cite{6}. \end{proof}

Due to the usefulness of the decomposition $\mathscr{O}_S(H)\simeq\mathscr{O}_S(D_1+D_2)$ in the Alexander-Bauer lemma, we make the following definition.

\begin{definition}\label{niceDec} Let $H$ be an effective divisor on a smooth surface $S$. We say that
$H\equiv D_1+D_2$ is a \textit{nice decomposition} if $D_1$ and $D_2$ are both effective divisors on $S$ such that
$D_2$ is non-special on $S$, i.e. $h^1(\mathscr{O}_S(D_2))=0$, and the intersection product $H.D_1=2p_a(D_1)-2$. \end{definition}

\vskip5pt
\subsection{Preliminaries about dimensions of the cohomology groups of line bundles on rational surfaces}

\vskip5pt

\begin{lem}\label{forsLem} Let $D$ and $H$ be effective divisors on a smooth surface $S$. Then:\\
(1). $h^2(\mathscr{O}_S(D))=0$.\\
(2). Suppose $H.D>2p_a(D)-2$ and $h^1(\mathscr{O}_S(H-D))=0$, then $h^1(\mathscr{O}_D(H))=0$.\\
(3). Suppose $D^2>2p_a(D)-2$, then $h^1(\mathscr{O}_S(D))=0$.
\end{lem}

\begin{proof}
(1). Combine Serre duality, $H^2(\mathscr{O}_S(D))\simeq H^0(\mathscr{O}_S(K_S-D))$, with that $K_S$ is not effective.\\
(2). By assumption $h^1(\mathscr{O}_S(H-D))=0$ and by (1), the short exact sequence
$$0\longrightarrow \mathscr{O}_S(H-D)\longrightarrow\mathscr{O}_S(H)\longrightarrow \mathscr{O}_D(H)\longrightarrow 0$$
gives $h^1(\mathscr{O}_S(H))=h^1(\mathscr{O}_D(H))$. Then combine $H.D>2p_a(D)-2$ with Riemann-Roch.\\
(3). Apply (2) with $H\equiv D$ and recall that $h^i(\mathscr{O}_D)=0$, for all $i>0$. 
\end{proof}

We make the following observation about nice decompositions.

\begin{lem}\label{canLem}
Let $H\equiv D_1+D_2$ be a nice decomposition on a smooth surface $S$, where $H.D_1=2p_a(D_1)-2$ and $D_2$ is non-special on $S$. Then $h^1(\mathscr{O}_S(H))=1$ if and only if $\mathscr{O}_{D_1}(D_2-K_S)\simeq\mathscr{O}_{D_1}$.
\end{lem}
\begin{proof}
Suppose $h^1(\mathscr{O}_S(H))=1$. Taking cohomology of the short exact sequence
$$0\longrightarrow\mathscr{O}_S(D_2)\longrightarrow\mathscr{O}_S(H)\longrightarrow\mathscr{O}_{D_1}(H)\longrightarrow 0$$
gives $h^1(\mathscr{O}_S(H))=h^1(\mathscr{O}_{D_1}(H))=1$ since $D_2$ is non-special on $S$ and by Lemma \ref{forsLem}.1.
By Serre duality, $h^1(\mathscr{O}_{D_1}(H))=h^0(\mathscr{O}_{D_1}(K_{D_1}-H))=1$. Then the adjunction formula
yields $\mathscr{O}_{D_1}(K_{D_1})\simeq \mathscr{O}_{D_1}(D_1+K_S)$, such that $h^0(\mathscr{O}_{D_1}(K_S-D_2))=1$. Then $D_1.(K_S-D_2)=0$ implies that $ \mathscr{O}_{D_1}(K_S-D_2)\simeq \mathscr{O}_{D_2}$.

Conversely, suppose $\mathscr{O}_{D_1}(D_2-K_S)\simeq\mathscr{O}_{D_1}$. Twist the latter with $\mathscr{O}_{D_1}(D_1+K_S)$ and combine with the adjunction formula to obtain $\mathscr{O}_{D_1}(H)\simeq \omega_{D_1}$.  Then consider the short exact sequence $$0\longrightarrow \mathscr{O}_S(D_2)\longrightarrow\mathscr{O}_S(H)\longrightarrow\omega_{D_1}\longrightarrow 0$$
Since $D_2$ is non-special on $S$ and by Lemma \ref{forsLem}.1, we get $h^1(\mathscr{O}_S(H))=h^1(\omega_{D_1})=1$.
\end{proof}

\noindent 

\section{A construction}\label{constt}

We are now ready to construct a smooth rational surface with the prescribed numerical invariants. Furthermore,
this construction will prove the statement about existence in Theorem \ref{mainT}.

\begin{thm}\label{constEx1}
It is possible to choose points $x_1,\hdots ,x_5,y_1,y_2,z_1,\hdots ,z_{10}\in \mathbb{P}^2$ such that
the divisor class
$$H\equiv 7L-\sum_{i_1=1}^5 2E_i-\sum_{i_2=1}^2 2F_i-\sum_{i_3=1}^{10}G_i$$
is very ample on $S$ and $|H|$ embeds $S$ as a rational surface of degree $11$ and sectional genus $8$ in $\mathbb{P}^5$, such that
$$\pi:S\longrightarrow\mathbb{P}^2$$ denotes the morphism obtained by blowing up the points $x_1,\hdots ,x_5,y_1,y_2,z_1,\hdots ,z_{10}$,
$E_i=\pi^{-1}(x_i)$, $F_i=\pi^{-1}(y_i)$, $G_i=\pi^{-1}(y_i)$ and $L=\pi^*l$ where
$l\subset\mathbb{P}^2$ is a line.
\end{thm}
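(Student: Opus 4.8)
The plan is to realize $S$ as the blow-up $\pi\colon S\to\mathbb{P}^2$ at the prescribed $17$ points and to deduce the very ampleness of $H$ from the Alexander--Bauer lemma (Lemma \ref{alexbau}). First I would record the canonical class $K_S\equiv -3L+\sum E_i+\sum F_i+\sum G_i$ and verify the numerical invariants directly from the intersection form: a short computation gives $H^2=11$ and, through the adjunction formula, $p_a(H)=8$, so the sectional genus is as claimed. Riemann--Roch then yields $\chi(\mathscr{O}_S(H))=5$, while Serre duality kills $h^2$ since $K_S-H$ is visibly not effective; the embedding into $\mathbb{P}^5$ therefore reduces to producing the single unit of speciality $h^1(\mathscr{O}_S(H))=1$, which is exactly what the special position of the points will be arranged to force.

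The geometric heart is the choice of the $17$ points. I would impose that they lie on a smooth plane quartic and, with double points prescribed at the seven points $x_1,\dots,x_5,y_1,y_2$, on a plane sextic; this is the configuration conjectured by Arrondo and Sols, and its existence I would establish by a parameter count, taking the residual freedom generic so that the auxiliary linear systems appearing below have their expected dimensions. With such a configuration in hand I would exhibit a nice decomposition $H\equiv A_1+A_2$ in the sense of Definition \ref{niceDec}, chosen so that $A_2$ is the genus-three canonical piece, satisfying $H\cdot A_2=2p_a(A_2)-2$, and $A_1$ is a non-special moving system. The role of Lemma \ref{canLem} is then to identify $\mathscr{O}_{A_2}(H)\simeq\omega_{A_2}$, its hypothesis that $(A_1-K_S)|_{A_2}$ be effective being precisely where the second of the two special curves enters, since $A_1-K_S$ is the quartic or the sextic through the points according to which decomposition is used.

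With the decomposition fixed, the three conditions of Lemma \ref{alexbau} would be checked in turn. Condition (1) holds because $A_1$ moves in at least a pencil. For condition (2) the canonical piece carries $\omega_{A_2}$, which is very ample because the relevant member is isomorphic to a smooth plane quartic, hence a non-hyperelliptic canonical curve of genus three; on the members of $|A_1|$ and on the remaining, possibly reducible, members of $|A_2|$ I would instead invoke the curve-embedding theorem (Theorem \ref{curvembd}), verifying $H\cdot A'\geq 2p_a(A')+1$ on every subcurve $A'$. The main obstacle is condition (3): the restriction $H^0(\mathscr{O}_S(H))\to H^0(\mathscr{O}_A(H))$ must be surjective for every member, and because $S$ is special this is delicate — the cokernel injects into $H^1(\mathscr{O}_S(H-A))$, so I must show the relevant coboundary maps into the one-dimensional space $H^1(\mathscr{O}_S(H))$ are injective. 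This forces tight control on the speciality of $H-A_1$ and $H-A_2$ and is exactly the step where the genericity built into the configuration is indispensable. Granting surjectivity, Lemma \ref{alexbau} gives that $H$ is very ample, and the invariants from the first step show that $|H|$ embeds $S$ as a surface of degree $11$ and sectional genus $8$ in $\mathbb{P}^5$.
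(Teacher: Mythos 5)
Your scaffolding (blow up $17$ points lying on a quartic and a sextic, force $h^1(\mathscr{O}_S(H))=1$ through a canonical-degree piece, then apply Lemma \ref{alexbau}) looks superficially like the paper's, but both decompositions you propose make Lemma \ref{alexbau} provably inapplicable, and the failure is a dimension count, not the ``delicate'' genericity issue you describe under condition (3). In your decomposition with $A_2$ the sextic transform $6L-2\sum E_i-2F_1-2F_2-\sum G_i$ (so $p_a(A_2)=3$, $H.A_2=4$), the residual piece is $A_1\equiv L$; a general member $X\in|L|$ is a smooth rational curve with $H.X=7$, hence $h^0(\mathscr{O}_X(H))=8$, while $h^0(\mathscr{O}_S(H))=6$, so the restriction map $H^0(\mathscr{O}_S(H))\to H^0(\mathscr{O}_X(H))$ cannot possibly be surjective. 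The same happens for the other decomposition, $A_2\equiv 4L-\sum E_i-F_1-F_2-\sum G_i$ and $A_1\equiv 3L-\sum E_i-F_1-F_2$: a general member of $|A_1|$ is an irreducible genus-$1$ curve with $H.A_1=7$, so $h^0(\mathscr{O}_{A_1}(H))=7>6$. Equivalently, the cokernel you propose to control injects into $H^1(\mathscr{O}_S(H-A_1))=H^1(\mathscr{O}_S(A_2))$, and since $\chi(\mathscr{O}_S(A_2))=-3$ (resp.\ $-2$) while $A_2$ is effective, one has $h^1(\mathscr{O}_S(A_2))\geq 4$ (resp.\ $\geq 3$); no choice of configuration can remove this. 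This is exactly why the paper's decomposition has the opposite shape: there $H\equiv A+C$ with $A\equiv 6L-2\sum E_i-F_1-F_2-\sum G_i$ a \emph{genus-five} pencil of canonical degree ($H.A=8=2p_a(A)-2$, $h^0(\mathscr{O}_A(H))=5$) and $C\equiv L-F_1-F_2$ a rigid $(-1)$-curve with $H.C=3$, $h^0(\mathscr{O}_C(H))=4$; both restricted spaces fit inside the six sections, and surjectivity is then an exact rank count (Lemmas \ref{dimH} and \ref{dimA1}).

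Two further points. First, your fallback for condition (2), Theorem \ref{curvembd}, can never be invoked on any member $A'$ of a canonical-degree class, because the subcurve $A'=A'$ itself violates $H.A'\geq 2p_a(A')+1$; handling the (reducible) members of such a system is precisely the hard part of the paper's proof, requiring the partition into $\mathscr{A}_{\text{Good}}$ and $\mathscr{A}_{\text{Bad}}$, Lemma \ref{eksEmb}, and the identification with curves on the quartic Del Pezzo surface, for which your proposal has no substitute. Second, your configuration is not the paper's and is in fact excluded by it: a sextic with double points at all seven of $x_1,\hdots,x_5,y_1,y_2$ passing through the $z_i$ is exactly the non-emptiness of the linear system that the paper's open condition O5 forbids (the paper's sextic is singular only at $x_1,\hdots,x_5$ and is instead \emph{tangent} to the quartic at $y_1,y_2$, which is the closed condition C1 and the source of the isomorphism $\mathscr{O}_A(H)\simeq\omega_A$ in Lemma \ref{const1Can}). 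So even setting aside the Alexander--Bauer failure, you would be building the very surface the paper's argument rules out, and you would owe a separate argument that very ampleness survives in that geometry.
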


\begin{proof} We begin by choosing $x_1,\hdots, x_5\in\mathbb{P}^2$ in general position, in which case the open conditions\\
\\
\begin{tabular}{l l}
(O1). No three points $x_i$ are collinear.\\
\end{tabular}\\

\noindent are satisfied. Let $\pi_1:S_1\longrightarrow\mathbb{P}^2$
denote the morphism obtained by blowing up $x_1,\hdots ,x_5$ and denote $E_i=\pi_1^{-1}(x_i)$. On the rational
surface $S_1$ we study the complete linear systems associated to the following two divisor classes
\begin{eqnarray*}
-2K_{S_1} & \equiv & 6L-2E_1-\hdots  -2E_5\\
L-K_{S_1} & \equiv & 4L-E_1-\hdots -E_5.
\end{eqnarray*}

\noindent Since $(S_1,-K_{S_1})$ is a quartic Del Pezzo surface, the anti-canonical divisor $-K_{S_1}$
is very ample on $S_1$. In particular, this means that $-2K_{S_1}$ is very ample.
Furthermore, since $|L|$ is base-point free it follows that 
$L-K_{S_1}$ is very ample  by using the Segre embedding
and recalling that $\mathscr{O}_{\mathbb{P}^{N_1}\times\mathbb{P}^{N_2}}(1)\simeq p_1^*\mathscr{O}_{\mathbb{P}^{N_1}}(1)\otimes 
p_2^*\mathscr{O}_{\mathbb{P}^{N_2}}(1)$, where $p_i:\mathbb{P}^{N_1}\times\mathbb{P}^{N_2}\longrightarrow\mathbb{P}^{N_i}$
is the $i$-th projection map. So 
$-2K_{S_1}$ and $L-K_{S_1}$ are very ample on $S_1$. By Bertini's theorem, Theorem 20.2 in \cite{5}, a general
choice of curves in $|-2K_{S_1}|$ and $|L-K_{S_1}|$ are both irreducible and smooth.
\end{proof}

Next we choose the points $y_1,y_2,z_1,\hdots ,z_{10}\in\mathbb{P}^2$ such that the complete linear systems
\begin{eqnarray*}
\Delta_1 & = & |6l-\sum 2x_i-\sum y_i|
\\
\Delta_2 & = & |4l-\sum x_i-\sum y_i|
\end{eqnarray*}
on $\mathbb{P}^2$ satisfy the following open condition\\
\\
\begin{tabular}{l l}
(O2). & The pair of points $y_1$ and $y_2$ in $\mathbb{P}^2$, the pair of tangent directions $y_1'$ and $y_2'$ at these points\\
& and a pair of curves $(D_1,D_2)\in \Delta_1\times \Delta_2$ satisfy the condition that the intersection \\
& $D_1\cap D_2=\sum x_i+\sum y_i+\sum y_i'+\sum z_i$, where the $z_1,...,z_{10}$ are distinct closed points \\
& disjoint from the $x_i$ and the $y_i$.
\end{tabular}\\

Note that O2 is an open condition on $y_i, y_i'$ and $D_1$,$D_2$, while it implies that the points $z_1,...,z_{10}$ are in a special position. Furthermore, we claim that O2 is a non-empty condition. That is, we claim that it's possible to choose the points $y_1,y_2,z_1,\hdots,z_{10}\in\mathbb{P}^2$ on the intersection of a smooth quartic and a smooth sextic sharing tangent directions in the points $y_1,y_2$.\\
\\
\textit{Claim: The open condition O2 is non-empty.}\\
\\
Start off by choosing a smooth and irreducible curve $A_1\in |-2K_{S_1}|$, recall that this is possible due to Bertini's theorem, and consider the incidence
$\Lambda\subset A_1\times A_1\times |L-K_{S_1}|$, given by
$$\Lambda =\left\{(y_1,y_2,B_1)\:|\: y_1,y_2\in A_1\cap B_1,\:A_1\text{ and }B_1\text{ share common tangent directions at }y_1\text{ and }y_2\right\}.$$
Now, choose a triple
$(y_1,y_2,B_1)\in \Lambda$ such that $B_1$ is smooth and irreducible on $S_1$. Recall that this is possible due to Bertini's theorem. Since $|B_{ 1|A_1}|$ is base-point free, it follows that the curve $B_1$ is smooth
at each closed point in the zero-dimensional scheme $A_1\cap B_1$, and that the intersection is transversal except at $y_1$ and $y_2$.
So we may set
$$A_1\cap B_1=\sum_1^5 x_i+\sum_1^2 y_i+\sum_1^2 y_i'+\sum z_i$$
where $\{z_i\}$ are remaining points on the intersection of $A_1$ and $B_1$. Furthermore, there are 
$\#\{z_i\}=6\cdot 4-2\cdot 5-4=10$ number of distinct points in $\{z_i\}$. 

Now we blow up the points $y_1,y_2,z_1,\hdots z_{10}\in S_1$
to obtain a morphism $\pi_2:S\longrightarrow S_1$, such that we may define $\pi=\pi_1\circ\pi_2:S\longrightarrow\mathbb{P}^2$. Denote
\begin{eqnarray*}
A & \equiv & 6L-2E_1-\hdots -2E_5-F_1-F_2-G_1-\hdots -G_{10} \\
B & \equiv & 4L-E_1-\hdots -E_5-F_1-F_2-G_1-\hdots -G_{10} 
\end{eqnarray*}
as the strict transforms of $A_1$ and $B_1$ on $S$, respectively. Note that by our construction the divisors $A$ and $B$ are smooth and irreducible on the surface $S$, due to Bertini's theorem.

We are now ready to state the remaining sufficient conditions for  Theorem \ref{constEx1}
on the configuration of the points $x_i,y_j, z_k$. 
In addition to the open conditions O1-O2 we assume that the following open conditions on $S$ are satisfied.\\
\\
\begin{tabular}{l l l}
(O3). & $|L-\sum_{i\in I}E_i-F_j|=\emptyset$, & for $|I|\geq 2$ and for all $j$.\\
(O4). & $|2L-E_1-\hdots -E_5-F_j|=\emptyset$, & for all $j$.\\
(O5). & $|6L-2E_1-\hdots 2E_5-2F_{1}-2F_{2}-G_1-\hdots -G_{10}|=\emptyset$. & \\
(O6). & $|6L-2E_1-\hdots 2E_5-3F_{j}-F_{3-j}-G_1-\hdots -G_{10}|=\emptyset$, & for all $j$.\\
\end{tabular}
\vskip 3pt

It is straightforward but tediuos to check that these open conditons are satisfied by some surface $S$.

\begin{lem} The open conditions O1, O3-O6 are necessary conditions for $H$ to be very ample.
\end{lem}
\begin{proof}
Note that if at least one of the linear systems in O1,O3-O4 are non-empty, then there exists an effective $D$
on the surface $S$ such that $H.D\leq 0$.  All divisors $D$ in the linear systems of O5 or O6 satisfy $H.D=2$ and $p_a(D)\geq 2$. Therefore, O5 and O6 are also necessary.
\end{proof}

On the surface $S$, we study the divisor class of $A$ and the following two divisor classes:
\begin{eqnarray*}
C & \equiv & L-F_1-F_2 \\
H:=A+C & \equiv & 7L-2E_1-\hdots -2E_5-2F_1-2F_2-G_1-\hdots -G_{10}. 
\end{eqnarray*}

The rest of the argument is based on proving two things. First, we claim that $H$ admits exactly six global sections on $S$. Second, we claim that $H$ is very ample on $S$.

\begin{lem}\label{Herbra}
The decomposition $H\equiv A+C$ is nice.
\end{lem}
\begin{proof}
Note that $C$ corresponds to a line between two points, so $C$ is unique and non-special on $S$.
Also, $\chi (\mathscr{O}_S(A))=1$ and $\chi(\mathscr{O}_S(A))\leq h^0(\mathscr{O}_S(A))$ yields that $A$ is indeed effective on $S$. Furthermore, $H.A=2p_a(A)-2$. Thus $H\equiv A+C$ is indeed a nice decomposition. 
\end{proof}

\noindent We are ready to show that $\mathscr{O}_S(H)$ admits six global sections on the surface $S$.

\begin{lem}\label{const1Can} $\mathscr{O}_A(C-K_S)\simeq\mathscr{O}_A$ and $h^0(\mathscr{O}_S(H))=6$.
\end{lem}
\begin{proof}
By our construction, $A\cap B=y_1'+y_2'$ on the surface $S$.
Since tangent directions $y_1',y_2'$ at the points $y_1,y_2\in S$ corresponds to points on the exceptional divisors $F_1,F_2\subset S$. we have $\mathscr{O}_A(B)\simeq \mathscr{O}_A(F_1+F_2)$. That is, $\mathscr{O}_A(B-F_1-F_2)\simeq\mathscr{O}_A$ which combined with $\mathscr{O}_A(C-K_S)\simeq \mathscr{O}_A(B-F_1-F_2)$ gives the first statement. Finally, combining Lemma \ref{canLem} with $\chi(\mathscr{O}_S(H))=5$ we obtain $h^0(\mathscr{O}_S(A))=6$. \end{proof}

\noindent To finish the proof it remains to show that $H$ is very ample on $S$. We show this by applying
the Alexander-Bauer lemma and the curve embedding theorem to the decomposition $H\equiv A+C$. This
requires us to determine the dimension of the complete linear system $|A|$.

\begin{lem}\label{dimA1}
$h^0(\mathscr{O}_S(A))=2$.
\end{lem}
\begin{proof}
Taking the union of the $B$ and the unique quadric $Q$ passing through $x_1,\hdots, x_5$, we get that $B+Q\in |A|$. On the other hand, $A$ is chosen to be irreducible by construction so we have $h^0(\mathscr{O}_S(A))\geq 2$. In particular, this means that $h^1(\mathscr{O}_S(A))\geq 1$ since $\chi(\mathscr{O}_A(S))=1$. Now, consider the short exact sequence
$$0\longrightarrow\mathscr{O}_S\longrightarrow\mathscr{O}_S(A)\longrightarrow\mathscr{O}_A(A)\longrightarrow 0.$$
The rationality of $S$ implies that $h^1(\mathscr{O}_S(A))=h^1(\mathscr{O}_A(A))$ so that $\mathscr{O}_A(A)$ is special. By Clifford's theorem, Theorem IV.5.4 in \cite{11}, and the short exact sequence above we obtain
$$\displaystyle h^0(\mathscr{O}_S(A))=h^0(\mathscr{O}_A(A))-h^0(\mathscr{O}_S)\leq \frac{1}{2}A^2+1-1=2.$$
Therefore, $h^0(\mathscr{O}_S(A))=2$.\end{proof}

\noindent 
Next we show that every assumption in the Alexander-Bauer lemma, except the very ampleness of $H_{|D}$, for all $D\in |A|$,
is satisfied. Then we proceed to show that $H_{|D}$ is very ample, for all $D\in |A|$, by using the Curve embedding theorem.

\begin{lem} The complete linear system $|H|$ restricts to a very ample linear system on $C$ and
the restriction maps $H^0(\mathscr{O}_S(H))\longrightarrow H^0(\mathscr{O}_C(H))$
and $H^0(\mathscr{O}_S(H))\longrightarrow H^0(\mathscr{O}_D(H))$ are surjective, for all $D\in |A|$.
\end{lem}
\begin{proof}
The first assertion is true since $C$ is a smooth rational curve and $H.C>0$ . For the second assertion,
consider the short exact sequences
$$0\longrightarrow\mathscr{O}_S(C)\longrightarrow\mathscr{O}_S(H)\longrightarrow\mathscr{O}_D(H)\longrightarrow 0$$
$$0\longrightarrow\mathscr{O}_S(A)\longrightarrow\mathscr{O}_S(H)\longrightarrow\mathscr{O}_C(H)\longrightarrow 0$$
Since $C$ is a non-special curve on $S$, it follows from the first short exact sequence that 
$H^0(\mathscr{O}_S(H))\longrightarrow H^0(\mathscr{O}_D(H))$ is surjective, for all $D\in |A|$.
For the last assertion, the long exact sequence associated to the second short exact sequence is
$$\hdots  \rightarrow H^0(\mathscr{O}_S(H))\xrightarrow{\alpha} H^0(\mathscr{O}_C(H)) \xrightarrow{\beta} H^1(\mathscr{O}_S(A))\xrightarrow{\gamma} H^1(\mathscr{O}_S(H))\rightarrow 0$$
since $h^1(\mathscr{O}_C(H))=0$. Now, since $\gamma$ is surjective and $h^1(\mathscr{O}_S(A))=h^1(\mathscr{O}_S(H))$ due to Lemma \ref{const1Can} and \ref{dimA1}, $\ker(\gamma)=\im(\beta)=0$. Thus $\alpha$ is surjective.
\end{proof}

\noindent To show that $\mathscr{O}_D(H)$ is very ample, for all $D\in |A|$, we partition $|A|$ into
the following two families of curves on $S$ and consider each family separately. 

$$\mathscr{A}_{\text{Good}}=\{ D\in |A|\: :\:\text{Every subcurve }A'\leq D\text{ satisfies }A'.F_j\leq 1,\:\text{for all }j\}.$$
$$\mathscr{A}_{\text{Bad}}=\{ D\in |A|\: :\:\text{Some subcurve }A'\leq D\text{ satisfies }A'.F_j>1,\:\text{for some }j\}.$$

\noindent The idea behind the partition is that the curves $D$ meeting the exceptional divisors $F_j$ at most once, i.e. $D\in\mathscr{A}_{\text{Good}}$, are isomorphic to their image under the blow-up $\pi_2:S\longrightarrow S_1$, making it possible to utilize the morphism $\pi_2$ to show that $\mathscr{O}_D(H)$ are very ample.\\
\\
\noindent First we consider the curves in $\mathscr{A}_{\text{Bad}}$. Denote $$D_j  \equiv  A-F_j,$$ where $1\leq j\leq 2$. We claim that $\mathscr{A}_{\text{Bad}}$ consists of exactly the two divisors $D_1$ and $D_2$.

\begin{lem}\label{badChar}
Let $D\in \mathscr{A}_{\text{Bad}}$. Then $D$ is reducible and $D=D'+F_j$, for some $D'\in |D_j|$ and for some $j$. Furthermore, $D'$ does not contain $F_j$ as a component.
\end{lem}
\begin{proof} 
It is clear that $D$ is reducible since there exists a component $A'$ of $D$ such that $A'.F_j>1$, for some $j$. Then some $F_j$ is a component of $D$ and that component cannot have multiplicity $>1$ due to the open condition O5.
\end{proof}

\noindent Next, we show that the pencil $|A|$ has base points on $F_1$ and $F_2$ by showing that $|D_1|$ and $|D_2|$ are indeed non-empty on $S$.

\begin{lem}\label{badEff}
$h^0(\mathscr{O}_S(D_j))=1$, for all $1\leq j\leq 2$.
\end{lem}
\begin{proof}
Let $Q$ be the unique conic passing through $x_1,\hdots, x_5$. Without loss of generality, we may assume that $j=1$. Consider the short exact sequence
$$0\longrightarrow \mathscr{O}_S(Q-F_1)\longrightarrow \mathscr{O}_S(D_1)\longrightarrow\mathscr{O}_B(D_1)\longrightarrow 0$$
From the open condition O5 and $\chi(\mathscr{O}_S(Q-F_1))=0$, it follows that $h^0(\mathscr{O}_S(D_1))=h^0(\mathscr{O}_B(D_1))$. Recall that by construction, $\mathscr{O}_B(A)\simeq \mathscr{O}_B(F_1+F_2)$. The latter implies that $\mathscr{O}_B(D_1)\simeq\mathscr{O}_B(F_{2}).$
Since $B.F_2=1$ and $B$ is not a rational curve, we have $h^0(\mathscr{O}_B(F_2))=1$. That is, $h^0(\mathscr{O}_S(D_1))=1$.
\end{proof}

\begin{lem}\label{eksEmb}
$\mathscr{O}_D(H)$ is very ample, for all $D\in \mathscr{A}_{\text{Bad}}$.
\end{lem}
\begin{proof}
Due to Lemma \ref{badChar} and Lemma \ref{badEff} it suffices to show that $\mathscr{O}_{D}(H)$ is very ample, 
when $D=D_1\cup F_1$ and $D=D_2\cup F_2$.

 Note that
$\mathscr{O}_{F_j}(H)$ is very ample since $H.F_j>2p_a(F_j)+1$.
 Next, we claim that
the $\mathscr{O}_{D_j}(H)\simeq\omega_{D_j}$. Taking cohomology of the short exact
sequence
$$0\longrightarrow\mathscr{O}_S(C+F_j)\longrightarrow\mathscr{O}_S(H)
\longrightarrow\mathscr{O}_{D_j}(H)\longrightarrow 0$$
we note that $C+F_j$ is a non-special curve on $S$, such that
$h^1(\mathscr{O}_{D_j}(H))=h^1(\mathscr{O}_S(H))=1$ due to Lemma \ref{const1Can}. Combining
the latter with $H.D_j=2p_a(D_j)-2$ we conclude that $\mathscr{O}_{D_j}(H)
\simeq\omega_{D_j}$. Then the adjunction formula yields that
$$\mathscr{O}_{D_j}(H)\simeq\mathscr{O}_{D_j}(D_j+K_S)\simeq
\mathscr{O}_{D_j}(3L-E_1-\hdots -E_5-F_j).$$
By conditions O1,O3,O4, the linear system $|3L-E_1-\hdots -E_5-F_j|$ contracts only exceptional curves on $S$ that intersect $D_j$ in at most one point, so it, and hence $|H|$ is very ample on $D_j$.  It embeds $D_j$ as a complete intersection of a quadric and a cubic surface in a $\mathbb{P}^3$.

Finally,
we show that $|H|$ is very ample on the union $D_j\cup F_j$. The intersection $D_j\cap F_j$ is a pair of points that span a line.  Since $\varphi_H(F_j)$ is a conic, and $\varphi_H(D_j)$ is a space curve, the linear system  
$|H|$ is very ample on the union $D_j\cup F_j$ if the union $\varphi_H(F_j)\cup \varphi_H(D_j)$ spans a $\mathbb{P}^4$.
For this, note that $D_j+F_j\equiv A$, so 
$$\dim (\varphi_H(D_j)\cup \varphi_H(F_j)) = h^0(\mathscr{O}_{A}(H))-1=4.$$
Thus $\mathscr{O}_A(H)$ is very ample, for all $\mathscr{A}_{\text{Bad}}$.\end{proof}

\noindent Finally, we consider the curves in $\mathscr{A}_{\text{Good}}$.

\begin{lem}
$\mathscr{O}_D(H)$ is very ample, for all $D\in \mathscr{A}_{\text{Good}}$.
\end{lem}
\begin{proof}
Let $D'$ be a component of some $D\in \mathscr{A}_{\text{Good}}$.
It is clear that the blow-up morphism $\pi_2:S\longrightarrow S_1$ defines an isomorphism
$D'\simeq \pi_2(D')$. On the other hand,
since 
$$(\pi_2(D)+K_{S_1})\cdot F_i=(\pi_2(D)+K_{S_1})\cdot G_j=0$$
for all $1\leq i\leq 2$ and $1\leq j\leq 10$, it follows that
the $D+K_S\equiv \pi_2^*(\pi_2(D)+K_{S_1})$. 
But $\pi_2(D)\equiv -2K_{S_1}$, so that
$$\mathscr{O}_D(D+K_S)\simeq\pi_2^*\mathscr{O}_{\pi_2(D)}(\pi_2(D)+K_{S_1})\simeq\pi_2^*\mathscr{O}_{\pi_2(D)}(-K_{S_1}).$$
By Lemma \ref{const1Can}, we have
$$\mathscr{O}_D(H)\simeq\mathscr{O}_D(H-C+K_S)\simeq\mathscr{O}_D(D+K_S).$$

Therefore, $\mathscr{O}_D(H)$ is very ample whenever 
$\mathscr{O}_{D"}(-K_{S_1})$ is very ample for every curve in $D"\in |-2K_{S_1}|$. 
Now, recall that $\mathscr{O}_{S_1}(-K_{S_1})$ is indeed very ample, on $S_1$ and hence on any curve $D"$, since $(S_1,-K_{S_1})$
is a quartic Del Pezzo surface.

\noindent This concludes the proof of Theorem \ref{constEx1}. \end{proof}

\section{Lifting examples}\label{liftSect}

We present two examples of linear systems 
$|H|$ that cannot be both special and very ample on a surface with the prescribed numerical invariants. These examples will be used to rule out possible adjunction maps in the classification done in Theorem \ref{kuttThm}.

The idea behind the examples is to search for nice decompositions, $\mathscr{O}_S(H)\simeq \mathscr{O}_S( A_1+A_2)$, such that
some divisor $C$ on $S$ restricts to the trivial bundle on one of the components $A_i$, and the section of 
$\mathscr{O}_{A_i}(C)$ lifts to a section of $\mathscr{O}_S(C)$. On the other hand, we will establish that the effectiveness of $\mathscr{O}_S(C)$ contradicts the very ampleness of $\mathscr{O}_S(H)$.

\begin{prp}\label{liftEx1}
Let $S$ be a smooth rational surface and let $\pi:S\longrightarrow\mathbb{P}^2$
denote the morphism obtained by blowing up the points $x_1,\hdots ,x_{15}\in \mathbb{P}^2$. Furthermore,
let $E_i=\pi^{-1}(x_i)$ and $L=\pi^*l$, where $l\subset\mathbb{P}^2$ is a line. Suppose the divisor class
$$H\equiv 10L-4E_1-\sum_{i=2}^8 3E_i-2E_9-\sum_{j=10}^{15}E_j$$
has $h^0(\mathscr{O}_S(H))=6$. Then $H$ is not very ample on $S$.
\end{prp}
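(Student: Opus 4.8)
The plan is to assume that $H$ is very ample and to derive a contradiction by producing a nonzero effective divisor meeting $H$ in degree zero. First I would record the numerics: with $K_S\equiv -3L+\sum_{i=1}^{15}E_i$ one computes $H^2=11$ and $H.K_S=3$, so $p_a(H)=8$ and $\chi(\mathscr{O}_S(H))=5$; since $S$ is rational, $h^2(\mathscr{O}_S(H))=h^0(\mathscr{O}_S(K_S-H))=0$, hence the hypothesis $h^0(\mathscr{O}_S(H))=6$ is equivalent to $h^1(\mathscr{O}_S(H))=1$. The decomposition I would use is the nice decomposition $H\equiv A+B$ with $B\equiv L-E_1-E_2$ and $A\equiv H-B$. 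Here $B$ is a $(-1)$-curve with $H.B=3$, so it is effective and non-special, while $A^2=4$ and $A.K_S=4$ give $p_a(A)=5$ and $H.A=8=2p_a(A)-2$, confirming that this is a nice decomposition in the sense of Definition \ref{niceDec}. The target divisor is $C\equiv B-K_S\equiv 4L-2E_1-2E_2-E_3-\cdots -E_{15}$, which satisfies $H.C=H.B-H.K_S=0$ and $C\neq 0$.

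The first substantive step is to show $\mathscr{O}_A(H)\simeq\omega_A$. Taking cohomology of $0\to\mathscr{O}_S(B)\to\mathscr{O}_S(H)\to\mathscr{O}_A(H)\to 0$ and using that $B$ is non-special with $h^2(\mathscr{O}_S(B))=h^0(\mathscr{O}_S(K_S-B))=0$, I get $h^1(\mathscr{O}_A(H))=h^1(\mathscr{O}_S(H))=1$. Since in addition $\deg\mathscr{O}_A(H)=H.A=2p_a(A)-2=\deg\omega_A$, the degree-plus-speciality criterion already used in Lemma \ref{eksEmb} forces $\mathscr{O}_A(H)\simeq\omega_A$. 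Combined with the adjunction formula $\omega_A\simeq\mathscr{O}_A(A+K_S)$ this yields $\mathscr{O}_A(C)\simeq\mathscr{O}_A(B-K_S)\simeq\mathscr{O}_A$; equivalently $C$ restricts to the zero divisor on $A$, and the constant function is a nonzero section of $\mathscr{O}_A(C)$. This is exactly the mechanism of Lemma \ref{canLem}, whose effectivity hypothesis $(B-K_S)|_A\geq 0$ is now automatic.

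The second step is the lift. From $0\to\mathscr{O}_S(C-A)\to\mathscr{O}_S(C)\to\mathscr{O}_A(C)\to 0$ I would lift the section above provided $H^1(\mathscr{O}_S(C-A))=0$. A direct computation gives $C-A\equiv -5L+E_1+2E_3+\cdots +2E_8+E_9$, so $(C-A).L=-5<0$ forces $h^0(\mathscr{O}_S(C-A))=0$, while $\chi(\mathscr{O}_S(C-A))=0$; hence $H^1(\mathscr{O}_S(C-A))=0$ is equivalent to $h^2(\mathscr{O}_S(C-A))=h^0(\mathscr{O}_S(K_S-C+A))=0$. Stripping the fixed exceptional components $E_2,E_{10},\ldots ,E_{15}$ from $K_S-C+A\equiv 2L+E_2-E_3-\cdots -E_8+E_{10}+\cdots +E_{15}$ reduces this to $h^0(\mathscr{O}_S(2L-E_3-\cdots -E_8))=0$, that is, to the statement that the six points $x_3,\ldots ,x_8$ do not lie on a conic. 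Granting this vanishing, the section lifts, $C$ is effective and nonzero, and $H.C=0$ contradicts the very ampleness of $H$.

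The main obstacle is precisely this last vanishing, and I would dispose of it by a dichotomy on the seven ``triple points'' $x_2,\ldots ,x_8$. If all seven lie on a conic, then $2L-E_2-\cdots -E_8$ is effective with $H.(2L-E_2-\cdots -E_8)=20-21=-1<0$, contradicting very ampleness outright. If they do not, then since a conic is determined by five general points, two conics through distinct six-element subsets would share five points and hence coincide, so not every six-element subset can lie on a conic; choosing $B\equiv L-E_1-E_j$ for an index $j$ whose complementary six points avoid every conic makes the argument above go through verbatim, the points $x_2,\ldots ,x_8$ playing symmetric roles in $H$. The only remaining care is with genuinely degenerate configurations (infinitely near or collinear points among the $x_i$), which either violate the open conditions forced by very ampleness or produce an even more negative curve; these I would handle as low-degree exceptions in the spirit of Proposition \ref{lowgenPrp}.
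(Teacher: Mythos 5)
Your proposal is correct in outline and does prove the proposition, but it is not the paper's argument: both proofs run the same general machine (a nice decomposition $H\equiv A+B$, Serre duality plus Lemma \ref{canLem} to show the target divisor restricts trivially to $A$, then a lifting sequence), yet the decompositions differ, and the difference has real consequences. The paper takes $B\equiv 3L-E_1-\hdots -E_8$ and $A\equiv H-B\equiv 7L-3E_1-2E_2-\hdots -2E_9-E_{10}-\hdots -E_{15}$, so the divisor being lifted is $B-K_S\equiv 6L-2E_1-\hdots -2E_8-E_9-\hdots -E_{15}$, of arithmetic genus $2$ and $H$-degree $2$, whose effectivity contradicts Proposition \ref{lowgenPrp}. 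The payoff of that choice is that the lifting only needs $h^1(\mathscr{O}_S(B-K_S-A))=h^1(\mathscr{O}_S(-L+E_1+E_9))=0$, which holds for every configuration of the fifteen points, so the paper needs no case analysis whatsoever. Your choice $B\equiv L-E_1-E_2$ buys a simpler residual curve and a cleaner contradiction (your $C\equiv B-K_S$ has $H.C=0$, so no appeal to Proposition \ref{lowgenPrp} is needed), but the price is that your lifting vanishing, $h^0(\mathscr{O}_S(2L-E_3-\hdots -E_8))=0$, is a genuine condition on the points, which is what forces your Bezout dichotomy; the paper's decomposition is engineered precisely to avoid this. A smaller structural difference: the paper's $A$ has $\chi(\mathscr{O}_S(A))=0$, so even its effectivity consumes the speciality hypothesis, whereas your $A$ has $\chi(\mathscr{O}_S(A))=1$ and is effective for free.

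Three repairs would make your write-up airtight. First, you never check that $A$ is effective, which the sequence $0\to\mathscr{O}_S(B)\to\mathscr{O}_S(H)\to\mathscr{O}_A(H)\to 0$ and all the curve-level duality require; this is immediate from $\chi(\mathscr{O}_S(A))=1$ and $h^2(\mathscr{O}_S(A))=h^0(\mathscr{O}_S(K_S-A))=0$, so say so. Second, your dichotomy should be closed with the two explicit computations you only gesture at: if four of $x_2,\hdots ,x_8$ are collinear in the generalized sense that $L-E_{i_1}-\hdots -E_{i_4}$ is effective, then $H.(L-E_{i_1}-\hdots -E_{i_4})=10-12=-2<0$, and if some $x_j$ is infinitely near some $x_i$ with $i,j\in\{2,\hdots ,8\}$, then $H.(E_i-E_j)=3-3=0$; both kill very ampleness, and these are exactly the configurations under which two distinct conics could share five of the assigned points, so ``hence coincide'' becomes legitimate and a good index $j$ exists. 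Third, be aware that your ``degree-plus-speciality criterion'' ($\deg=2p_a-2$ and $h^1\geq 1$ force $\omega_A$) tacitly uses that a degree-zero line bundle with a section on $A$ is trivial, which can fail when $A$ is reducible; this is the same unproved step sitting inside Lemma \ref{canLem} and Lemma \ref{eksEmb}, so you are exactly as rigorous as the paper here, but no more.
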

\begin{proof}
We consider the following complete linear system on the surface $S$.
$$\Delta :=|6L-2E_1-\hdots -2E_8-E_9-\hdots -E_{15}|$$
The idea is to show that $h^0(\mathscr{O}_S(H))=6$ implies $\Delta\neq \emptyset$. To do this, we study a nice decomposition of $H$ from which we construct an effective divisor on $\Delta$. In particular, we consider $H\equiv A+B$ where
\begin{eqnarray*}
A & \equiv & 7L-3E_1-2E_2-\hdots -2E_9-E_{10}-\hdots -E_{15}\\
B & \equiv & 3L-E_1-\hdots -E_8. 
\end{eqnarray*}

\begin{lem} Suppose $h^0(\mathscr{O}_S(H))=6$. Then $H\equiv A+B$ is a nice decomposition.
\end{lem}
\begin{proof}
\noindent Clearly $h^0(\mathscr{O}_S(B))>0$, but
there is a priori no reason to believe that $|A|$ is a non-empty complete linear system on $S$ since $\chi(\mathscr{O}_S(A))=0$. However, consider the short exact sequence
$$0\longrightarrow\mathscr{O}_S(A)\longrightarrow\mathscr{O}_S(H)\longrightarrow\mathscr{O}_B(H)\longrightarrow 0.$$
Since $H.B>2p_a(B)-2$, it follows that $h^1(\mathscr{O}_B(H))=0$. Then the surjectivity of $H^1(\mathscr{O}_S(A))\longrightarrow H^1(\mathscr{O}_S(H))$ yields that
$h^1(\mathscr{O}_S(A))\geq h^1(\mathscr{O}_S(H))$.
Therefore $h^0(\mathscr{O}_S(A))=h^1(\mathscr{O}_S(A))\geq 1$, i.e. $|A|$ is a non-empty complete linear system on $S$.
Furthermore, note that $H.A=2p_a(A)-2$ and that $h^1(\mathscr{O}_S(B))=0$. So, the decomposition 
$H\equiv A+B$ is indeed a nice decomposition.\end{proof}

\noindent We proceed by showing that it is possible to lift the non-zero global section of $\mathscr{O}_A(B-K_S)$ to the surface $S$ and thus showing that $\Delta$ is a non-empty complete linear system on $S$. 
\begin{lem}\label{lift1} Suppose $h^0(\mathscr{O}_S(H))=6$. Then $\Delta$ is non-empty. 
\end{lem}
\begin{proof}
By Lemma \ref{canLem} we have $\mathscr{O}_A(B-K_S)\simeq\mathscr{O}_A$. Consider the following short exact sequence
$$0\longrightarrow\mathscr{O}_S(B-K_S-A)\longrightarrow\mathscr{O}_S(B-K_S)\longrightarrow\mathscr{O}_A\longrightarrow 0.$$
Note that $B-K_S-A\equiv -L+E_1+E_9$ is not effective on $S$. Furthermore, $h^0(\mathscr{O}_{L-E_1-E_9})=1$ since $L-E_1-E_9$ is connected, so that $h^1(\mathscr{O}_S(B-K_S-A))=0$. Then the short exact sequence implies that
$h^0(\mathscr{O}_S(B-K_S))=h^0(\mathscr{O}_A)=1$. \end{proof}

\begin{lem}\label{va1}
Suppose $\Delta$ is non-empty. Then $H$ is not very ample.
\end{lem}
\begin{proof}
Let $C\in\Delta $. Note that $p_a(C)=2$ and $H.C=2$. If $H$ is very ample, then $H$ embeds $C$ as a conic or the union of two lines. Both cases contradict $p_a(C)=2$.
\end{proof}

\noindent This proves Proposition \ref{liftEx1}.\end{proof}

\vskip10pt
\begin{prp}\label{liftEx2}
Let $S$ be a smooth rational surface and let $\pi:S\longrightarrow\mathbb{P}^2$
denote the morphism obtained by blowing up the points $x_1,x_2,y_1\hdots ,y_{17}\in \mathbb{P}^2$. Furthermore,
let $E_i=\pi^{-1}(x_i)$, $F_i=\pi^{-1}(y_i)$ and $L=\pi^*l$, where $l\subset\mathbb{P}^2$ is a line. Suppose the divisor class
$$H\equiv 6L-\sum_{i=1}^2 2E_i-\sum_{i=1}^{17}F_i$$
has $h^0(\mathscr{O}_S(H))=6$. Then $H$ is not very ample on $S$.
\end{prp}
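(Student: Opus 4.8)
The plan is to follow the template of Proposition~\ref{liftEx1}: produce a divisor class $C_0$ on $S$ whose effectivity is incompatible with the very ampleness of $H$, and then force $|C_0|\neq\emptyset$ by transporting a section off a curve in a nice decomposition of $H$, using the hypothesis $h^0(\mathscr{O}_S(H))=6$.

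First I would isolate a necessary condition for very ampleness. Take $C_0\equiv 4L-2E_1-E_2-2F_1-F_2-\cdots-F_{17}$; one checks $p_a(C_0)=1$ and $H.C_0=0$. Thus if $|C_0|\neq\emptyset$ then $S$ carries an effective curve of degree $0$ against $H$, which violates Proposition~\ref{lowgenPrp} (a curve with $p_a\le 2$ must satisfy $H.C\ge 2p_a+1$) and contradicts the ampleness of $H$. Hence the emptiness of $|4L-2E_1-E_2-2F_1-F_2-\cdots-F_{17}|$ is necessary, and it suffices to prove that this fails as soon as $h^0(\mathscr{O}_S(H))=6$.

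Next I would introduce the decomposition $H\equiv A_1+A_2$ with $A_1\equiv L-E_1-F_1$ the strict transform of the line through $x_1$ and $y_1$, and $A_2\equiv 5L-E_1-2E_2-F_2-\cdots-F_{17}$. Here $A_1$ is a $(-1)$-curve, hence non-special, while $A_2$ satisfies $H.A_2=8=2p_a(A_2)-2$ with $\chi(\mathscr{O}_S(A_2))=1$, so $|A_2|\neq\emptyset$; thus the decomposition is nice in the sense of Definition~\ref{niceDec}, and by construction $C_0\equiv A_1-K_S$. The hypothesis enters exactly as in Proposition~\ref{liftEx1}: from $0\to\mathscr{O}_S(A_1)\to\mathscr{O}_S(H)\to\mathscr{O}_{A_2}(H)\to 0$ and the vanishing $h^1(\mathscr{O}_S(A_1))=h^2(\mathscr{O}_S(A_1))=0$ I obtain $h^1(\mathscr{O}_{A_2}(H))=h^1(\mathscr{O}_S(H))=1$. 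Serre duality on $A_2$ then makes $(K_S-A_1)|_{A_2}$ effective, and since $A_2.(A_1-K_S)=0$ this bundle has degree $0$, hence is trivial; so Lemma~\ref{canLem} gives $\mathscr{O}_{A_2}(H)\simeq\omega_{A_2}$ and therefore $\mathscr{O}_{A_2}(C_0)\simeq\mathscr{O}_{A_2}$.

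The final step is to lift the distinguished section of $\mathscr{O}_{A_2}(C_0)\simeq\mathscr{O}_{A_2}$ to $S$ through $0\to\mathscr{O}_S(C_0-A_2)\to\mathscr{O}_S(C_0)\to\mathscr{O}_{A_2}\to 0$, which would make $C_0$ effective and contradict the condition above. This is where I expect the real difficulty, and where the argument must diverge from Proposition~\ref{liftEx1}. There the analogous twist was non-special and the section extended for free; here $C_0-A_2\equiv -L-E_1+E_2-2F_1$ and one computes $h^1(\mathscr{O}_S(C_0-A_2))=4\neq 0$. This non-vanishing looks structural rather than accidental: in any nice decomposition realizing a forbidden class of the form $A_1-K_S$, a parity count on the relation $H.A_2=2p_a(A_2)-2$ forces $A_1$ to meet some exceptional $F_j$, so $C_0=A_1-K_S$ acquires a point of multiplicity two on $F_j$ and the one-step lift is obstructed. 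The crux is therefore to show that the particular section coming from $\mathscr{O}_{A_2}$ nevertheless lies in the image of $H^0(\mathscr{O}_S(C_0))$ — which amounts to controlling the connecting map $H^0(\mathscr{O}_{A_2})\to H^1(\mathscr{O}_S(C_0-A_2))$ by a secondary argument that accounts for the multiplicity-two point on $F_1$ (or by feeding in the further emptiness conditions that very ampleness imposes), rather than by the single cohomological vanishing that sufficed in Proposition~\ref{liftEx1}.
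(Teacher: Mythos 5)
Your setup is sound and in fact coincides with the paper's: your decomposition $H\equiv A_1+A_2$ is exactly the paper's $H\equiv B_{ij}+A_{ij}$ for $(i,j)=(1,1)$, your verification that it is nice is correct, and your conclusion $\mathscr{O}_{A_2}(C_0)\simeq\mathscr{O}_{A_2}$ for $C_0\equiv A_1-K_S$ is the paper's Lemma \ref{triv2}. Your computation $h^1(\mathscr{O}_S(C_0-A_2))=4$ is also correct. But the proposal stops exactly at the step that carries all the difficulty: you need the section of $\mathscr{O}_{A_2}(C_0)\simeq\mathscr{O}_{A_2}$ to lift to $H^0(\mathscr{O}_S(C_0))$, you observe that the obstruction group is nonzero, and you only gesture at a ``secondary argument'' controlling the connecting map $H^0(\mathscr{O}_{A_2})\to H^1(\mathscr{O}_S(C_0-A_2))$. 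No such argument is supplied, and there is no reason to expect one for this particular class: $C_0\equiv 4L-2E_1-E_2-2F_1-F_2-\hdots -F_{17}$ asks for a plane quartic with assigned double points at $x_1$ and $y_1$, which is strictly more than the hypothesis $h^0(\mathscr{O}_S(H))=6$ actually delivers; the paper never establishes effectivity of this class, so the gap is not cosmetic.

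The paper circumvents the obstruction not by controlling the connecting map but by changing the class that gets lifted. It twists the lifting sequence by $\mathscr{O}_S(E_2+2F_j)$, so that the subsheaf becomes $\mathscr{O}_S(-L+E_1)$, which is non-special, while the quotient becomes $\mathscr{O}_{A_j}(E_2+2F_j)\simeq\mathscr{O}_{A_j}(E_2)$ (using $A_j.F_j=0$), which has a nonzero section because $E_2$ is effective and $H^0(\mathscr{O}_S(E_2))\hookrightarrow H^0(\mathscr{O}_{A_j}(E_2))$ is injective. This proves effectivity of $C_j\equiv L-K_S+F_j\equiv 4L-E_1-E_2-F_1-\hdots -F_{17}+F_j$, a quartic with \emph{no} double points that misses one assigned point, and it does so for every $j$, i.e.\ it uses all seventeen decompositions rather than a single one. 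The price is $H.C_j>0$, so no immediate contradiction arises; the paper then applies Bezout (for $i\neq j$ the plane quartics $\pi(C_i)$ and $\pi(C_j)$ share $17>16$ points, hence a common component) to produce an effective $C\equiv 4L-E_1-E_2-F_1-\hdots -F_{17}$, and concludes via Proposition \ref{lowgenPrp} when $C$ is irreducible ($p_a(C)=3$, $H.C=3$) and via a pigeonhole argument on the components (three lines, or a conic and a line) against the emptiness conditions O1, O2 when $C$ is reducible. These two ideas --- the twist that trades the double points for non-speciality of the kernel, and the Bezout/pigeonhole endgame made necessary by the weaker lifted class --- are precisely what your proposal lacks, so as it stands it is not a proof.
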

\begin{proof}
We consider the following complete linear systems on the surface $S$.
\begin{center}
\begin{tabular}{l l}
$\Delta_1:=|L-\sum_{i\in I}E_i-\sum_{j\in J}F_j|$, & where $2|I|+|J|\geq 6$.\\
$\Delta_2:=|2L-\sum_{i\in I}E_i-\sum_{j\in J}F_j|$, & where $2|I|+|J|\geq 12$.\\
$\Delta_3:=|4L-E_1-E_2-F_1-\hdots -F_{17}|$. & \\
\end{tabular}
\end{center}

\noindent The idea now is to show that
if $h^0(\mathscr{O}_S(H))=6$, then at least one of the complete linear systems $\Delta_k$ is non-empty. To do so, we
study the decompositions $H\equiv A_{ij}+B_{ij}$, where $1\leq i\leq 2$, $1\leq j\leq 17$ and
\begin{eqnarray*}
A_{ij} & \equiv & 5L-2E_1-2E_2-F_1-\hdots -F_{17}+E_i+F_j\\
B_{ij} & \equiv & L-E_i-F_j.
\end{eqnarray*}
Note that the decompositions $H\equiv A_{ij}+B_{ij}$ are all nice decompositions due to Riemann-Roch, $B_{ij}$ are all non-special,
and that $H.A_{ij}=2p_a(A_{ij})-2$. 

\noindent 
\begin{lem}\label{lift2} Suppose $h^0(\mathscr{O}_S(H))=6$. Then the divisors $L-K_S+F_j$ are effective on $S$, for all $j$.
\end{lem}
\begin{proof}
 Without loss
of generality, we let $A_{j}:=A_{2j}$ and $B_j:= B_{2j}$. Recall that Lemma \ref{canLem} gives us $\mathscr{O}_{A_j}(B_j-K_S)\simeq\mathscr{O}_{A_j}$. Twisting the following short exact sequence
$$0\longrightarrow\mathscr{O}_S(B_j-K_S-A_j)\longrightarrow\mathscr{O}_S(B_j-K_S)\longrightarrow\mathscr{O}_{A_j}\longrightarrow 0$$
with $\mathscr{O}_S(E_2+2F_j)$, we get another short exact sequence.
$$0\longrightarrow\mathscr{O}_S(-L+E_1)\longrightarrow\mathscr{O}_S(L-K_S+F_j)\longrightarrow\mathscr{O}_{A_j}(E_2+2F_j)\longrightarrow 0.$$ 
Note that $-L+E_1$ is not effective on $S$. Furthermore, $h^0(\mathscr{O}_{L-E_1})=1$ since $L-E_1$ is connected, so that $h^1(\mathscr{O}_S(-L+E_1))=0$. In particular, this means that $h^0(\mathscr{O}_S(L-K_S+F_j))=h^0(\mathscr{O}_{A_j}(E_2+2F_j)).$
Next, combining the injection $H^0(\mathscr{O}_S(E_2+F_j))\hookrightarrow H^0(\mathscr{O}_S(E_2+2F_j))$ with $E_2+F_j$ being effective on $S$, it follows that $E_2+2F_j$ is effective on $S$. Then the short exact sequence 
$$0\longrightarrow \mathscr{O}_S(E_2+2F_j-A_j)\longrightarrow\mathscr{O}_S(E_2+2F_j)\longrightarrow\mathscr{O}_{A_j}(E_2+2F_j)\longrightarrow 0$$
yields that $h^0(\mathscr{O}_S(L-K_S+F_j))=h^0(\mathscr{O}_{A_j}(E_2+2F_j))>0$ since $h^0(\mathscr{O}_S(E_2+2F_j-A))=0$. \end{proof}

\noindent Denote $C_j:= L-K_S+F_j$. Then
$$C_j \equiv  4L-E_1-E_2-F_1-\hdots -F_{17}+F_j$$
are effective divisors on $S$, for all $j$, by Lemma \ref{lift2}. We are now ready to show how this implies that $H$ is not very ample
and thus proving the Proposition. 

\begin{lem}
Suppose $C_j$ are effective, for all $j$. Then $H$ is not very ample.
\end{lem}
\begin{proof}
The images of the curves $C_j$ under the blowdown morphism $\pi: S\longrightarrow\mathbb{P}^2$
are plane quartic curves passing through the points $x_1,x_2$ and $16$ of the points $y_1,\hdots ,y_{17}$. Bezout's theorem
implies that every two curves $\pi(C_i)$ and $\pi(C_j)$ share a common component, since the points $x_1,x_2$ and $15$ of the points
$y_1,\hdots ,y_{17}$ lie on the set-theoretic intersection $\pi(C_i)\cap \pi(C_j)$, whenever $i\neq j$. In particular, this means that there exists a plane
quartic passing through the points $x_1,x_2,y_1,\hdots ,y_{17}$ which implies that the divisor
$$
C \equiv 4L-E_1-E_2-F_1-\hdots -F_{17}
$$
is effective on the surface $S$. Now, suppose $H$ is very ample. Then the complete linear system $|H|$ embeds $C$ as a cubic curve due to $H.C=3$. The irreducible components of the curve $C$ are one of the following three cases: (1). Three lines. (2). A conic and a line. (3). Plane cubic curve.

In the first case, by pigeonholing the points $19$ points among the $3$ lines, clearly at least one of the lines
passes through at least $7$ of the exceptional divisors which in turn implies that $\Delta_1$ is non-empty. In the second case,
again by pigeonholing the points, it follows that either the conic passes through at least $12$ of the points or the line passes through
at least $8$ of the points, such that $\Delta_1$ or $\Delta_2$ is non-empty. In both cases, $H.D\leq 0$ for all $D\in |\Delta_1|\cup |\Delta_2|$ such that $H$ is not very ample. Finally, the third case occurs when $\Delta_3$ contains an irreducible plane cubic of genus three which is absurd.
\end{proof}
\noindent This proves Proposition \ref{liftEx2}.
\end{proof}

\section{Classification}\label{adjSect}

Let $S$ be a smooth rational surface and let
$i:S\hookrightarrow\mathbb{P}^n$ be a projective embedding with hyperplane section $H$. To carry out the classification we determine explicit expressions
for $H$, whenever the degree $H^2=11$ of the surface and the sectional genus $\pi_S=8$. To do so we study the adjoint
linear system $|H+K_S|$ on $S$ instead and make use of a result, due to Sommese and Van de Ven, which states
that $|H+K_S|$ almost always defines a birational morphism to some projective space. 

\begin{thm}\label{adj}
Let $S$ be a smooth rational surface in $\mathbb{P}^n$, let $H$ denote the
class of a hyperplane section of $S$, let $K_S$ denote the class of a canonical divisor
of $S$ and let $\tilde{\mathbb{P}}^2(x_1,..,x_r)$ denote $\mathbb{P}^2$ blown up in the points $x_1,\hdots , x_r$.  Then
the adjoint linear system $|H+K_S|$ defines a birational morphism
$$\varphi_{|H+K_S|}:S\longrightarrow\mathbb{P}^N$$
onto a smooth surface $S_1$ and $\varphi_{|H+K_S|}$
blows down $(-1)$-curves $E$ on $S$ such that \begin{equation*} K_S.E=-1\text{ and }H.E=1,\end{equation*}unless one of the following three cases occurs:\\
\begin{tabular}{l l}
(1). & $S$ is a plane, or $S$ is a Veronese surface of degree $4$, or $S$ is ruled by lines.\\
(2). & $(H+K_S)^2=0$, which occurs if and only if $S$ is a Del Pezzo surface or $\varphi$ is a conic bundle.\\
(3). & $(H+K_S)^2>0$ and $S$ belongs to one of the following four families:\\
& \begin{tabular}{l l}
(i). & $S=\tilde{\mathbb{P}}^2(x_1,..,x_7)$ is embedded by $H\equiv 6L-2E_1-\hdots -E_7$.\\
(ii). & $S=\tilde{\mathbb{P}}^2(x_1,..,x_8)$ is embedded by $H\equiv 6L-2E_1-\hdots -2E_7-E_8$.\\
(iii). & $S=\tilde{\mathbb{P}}^2(x_1,..,x_8)$ is embedded by $H\equiv 9L-3E_1-\hdots -3E_8$.
\end{tabular}
\end{tabular}
\end{thm}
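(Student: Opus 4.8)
The plan is to translate the trichotomy into a study of the positivity of the adjoint class $K_S+H$: the three exceptional cases are exactly the three ways the adjoint system can fail to contract $S$ birationally onto a smooth surface, namely (1) $K_S+H$ is not nef, (2) $K_S+H$ is nef with $(K_S+H)^2=0$, and (3) $K_S+H$ is nef and big but $\varphi_{|K_S+H|}$ is not birational. The only inputs I would need are that $H$ is very ample (hence nef, with $H\cdot C\geq 1$ for every curve $C$), the adjunction formula $2p_a(C)-2=C^2+K_S\cdot C$, the Hodge index theorem, Kawamata--Viehweg vanishing, and Reider's theorem to control the base locus of $K_S+H$.

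First I would establish nefness. If $(K_S+H)\cdot C<0$ for some irreducible $C$, then $H\cdot C\geq 1$ gives $K_S\cdot C\leq -H\cdot C-1\leq -2$, and adjunction yields $C^2=2p_a(C)-2-K_S\cdot C\geq 2p_a(C)+H\cdot C-1>0$. Hence $C$ has positive self-intersection while $-K_S$ is strictly positive on it, so $C$ sweeps out a covering family of rational curves of very small $H$-degree; a covering-family argument (bend-and-break) then forces $(S,H)$ to be $(\mathbb{P}^2,\mathscr{O}(1))$, the Veronese $(\mathbb{P}^2,\mathscr{O}(2))$, or a surface ruled by lines, on which $K_S+H$ is $\mathscr{O}(-2)$, $\mathscr{O}(-1)$, or has degree $-1$ on a ruling, respectively. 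This is case (1). Assuming now that $K_S+H$ is nef, if moreover $(K_S+H)^2=0$ then either $K_S+H\equiv 0$, so $H\equiv -K_S$ is ample and $S$ is Del Pezzo, or $|m(K_S+H)|$ defines a fibration over a curve whose general fibre $F$ has $(K_S+H)\cdot F=0$; then $H\cdot F=-K_S\cdot F$ together with $H\cdot F\geq 1$ and adjunction forces $F$ to be a conic, so $S$ is a conic bundle. As the converse implications are immediate, this is case (2).

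The core is the nef and big case $(K_S+H)^2>0$. Outside the finitely many surfaces of degree $H^2\leq 4$ (which are checked by hand and already lie in (1) or (2)) we have $H^2\geq 5$, so Reider's theorem applied to the nef divisor $H$ shows $K_S+H$ is base-point free: a base point would produce an effective $E$ with $H\cdot E\leq 1$ and $E^2\geq -1$, excluded by the very ampleness of $H$ and the nefness of $K_S+H$. Thus $\varphi_{|K_S+H|}$ is a morphism, and its contracted curves are the irreducible $C$ with $(K_S+H)\cdot C=0$. For such $C$, nefness and bigness force $C^2<0$ by Hodge index, and then $K_S\cdot C=-H\cdot C$ with adjunction gives $C^2=2p_a(C)-2+H\cdot C<0$, which is possible only when $p_a(C)=0$, $H\cdot C=1$ and $C^2=-1$. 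So the contracted curves are exactly the $(-1)$-lines with $K_S\cdot E=-1$ and $H\cdot E=1$. Since the classes of contracted curves span a negative-definite lattice, two such lines $E,E'$ satisfy $(E+E')^2<0$, forcing $E\cdot E'=0$; they are pairwise disjoint, and their simultaneous contraction yields a smooth surface $S_1$ by Castelnuovo's criterion.

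It remains to decide when $\varphi_{|K_S+H|}$ is birational onto $S_1$. If not, it is generically finite of some degree $\delta\geq 2$ onto its image $S_1'\subset\mathbb{P}^N$, and the relation $(K_S+H)^2=\delta\cdot\deg S_1'$ together with a ramification analysis pins $\delta=2$ and identifies $S_1'$ with $\mathbb{P}^2$ or a quadric cone. Reconstructing $S$ as the corresponding double cover forces it to be a Del Pezzo surface of degree $1$ or $2$, and reading off the anticanonical data recovers precisely the three embeddings (i)--(iii) of case (3). I expect the two genuinely delicate points to be this classification of the non-birational (double-cover) surfaces into exactly three families, and the covering-family argument in the nefness step; by contrast, the base-point-freeness and the identification of the contracted curves as pairwise disjoint $(-1)$-lines are routine once Reider's theorem and the Hodge index theorem are available.
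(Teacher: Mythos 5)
The paper does not actually prove this statement: Theorem \ref{adj} is quoted from Sommese and Van de Ven, and the ``proof'' in the paper is the citation of \cite{SV87}. Your Reider-based outline is essentially the same strategy as that source (Reider's criterion is the engine of the Sommese--Van de Ven argument as well), and its routine components are correct: base-point freeness of $H+K_S$ via Reider once $H^2\geq 5$, the Hodge-index argument identifying the contracted curves as exactly the pairwise disjoint $(-1)$-lines with $H.E=1$, $K_S.E=-1$, and smoothness of $S_1$ by Castelnuovo's criterion. So the question is whether your outline constitutes a proof, and there it falls short at precisely the two places you yourself flag as delicate.

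First, in the non-nef step your inequality gives only $C^2\geq 2p_a(C)+H.C-1\geq 0$, not $C^2>0$: the borderline case $p_a(C)=0$, $H.C=1$, $C^2=0$ is exactly a moving line, i.e.\ the scroll case, so the strict inequality you assert is false and the subsequent ``covering family / bend-and-break'' classification is named but never carried out. Second, and more seriously, the reconstruction in the nef-and-big case is wrong as stated: ``reconstructing $S$ as the corresponding double cover forces it to be a Del Pezzo surface of degree $1$ or $2$'' fails for family (ii), where $S$ is a degree-two Del Pezzo surface blown up at one further point. There $\varphi_{|H+K_S|}$ is not finite: since $H+K_S\equiv 3L-E_1-\hdots -E_7$ has intersection zero with $E_8$, the map first contracts the $(-1)$-line $E_8$ and only then maps two-to-one onto $\mathbb{P}^2$. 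A correct argument must pass to the Stein factorization of $\varphi_{|H+K_S|}$, classify the finite degree-two part (the anticanonical models of degree $2$ and degree $1$ Del Pezzo surfaces, with images $\mathbb{P}^2$ and the quadric cone respectively), and then enumerate which blow-ups $S$ of these surfaces carry a \emph{very ample} $H$ with $H+K_S$ pulled back from the finite part; it is this very-ampleness constraint that cuts the enumeration down to exactly (i)--(iii). As written, your argument both misses family (ii) and gives no proof that the exceptional list is complete.
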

\begin{proof}
See \cite{13} for a proof. 
\end{proof}

A crucial difference between surfaces in $\mathbb{P}^4$ and surfaces in $\mathbb{P}^5$ is that every
smooth surface can be embedded into $\mathbb{P}^5$ through generic projection. In particular, this implies that
we cannot expect a relation between invariants of a surface in $\mathbb{P}^5$ similar to the double-point formula of $\mathbb{P}^4$, 
see Example A4.1.3 in \cite{11}, which states that $c_{\codim(S,\mathbb{P}^4)}(\mathscr{N}_{S|\mathbb{P}^4})-(\deg S)^2=0$,
when $S\subset \mathbb{P}^4$. The double-point formula for surfaces in $\mathbb{P}^4$ plays an important role since
it completely determines $K_S^2$ whenever $\deg S$ and $\pi_S$ are given. Since we are considering surfaces in $\mathbb{P}^5$, our
strategy is to limit ourselves to finitely many possibilities for $K_S^2$, whenever we are given $\deg S$ and $\pi_S$. In the next result,
we give an upper and lower bound for $K_S^2$ and show some other generalities on the adjunction mapping.

\begin{lem}\label{adjLem} Suppose $H$ is very ample divisor on $S$ and suppose $\varphi_{|H+K_S|}$ is a birational morphism onto $S_1$. Then:\\
\begin{tabular}{l l}
(1). & $\varphi_{|H+K_S|}$ maps $S$ into $\mathbb{P}^{\pi_S-1}$.\\
(2). & $\pi_S-2-H.(H+2K_S)\leq  K_S^2\leq \lceil (H.K)^2/H^2\rceil$.\\
(3). & $(H+K_S).K_{S_1}=(H+K_S).K_S$\\
(4). & $\pi_{S_1}=\pi_S+(H+K_S).K_S$\\
(5). & If $H.K\geq -2$, then $K^2<0$.
\end{tabular}
\end{lem}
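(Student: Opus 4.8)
The plan is to dispatch the five assertions in turn, all through Riemann--Roch, the projection formula, and two geometric inputs: the Hodge index theorem and the classical degree bound for nondegenerate surfaces; only the last part will require a genuine idea. \emph{For Part (1)}, I would first compute $\chi(\mathscr{O}_S(H+K_S))$ by Riemann--Roch: since $\chi(\mathscr{O}_S)=1$ for a rational surface and $H.(H+K_S)=2\pi_S-2$,
\[
\chi(\mathscr{O}_S(H+K_S))=1+\tfrac{1}{2}(H+K_S).H=1+\tfrac{1}{2}(2\pi_S-2)=\pi_S.
\]
To pass from $\chi$ to $h^0$ I would invoke Kodaira vanishing: as $H$ is ample, $h^i(\mathscr{O}_S(K_S+H))=0$ for $i>0$, so $h^0(\mathscr{O}_S(H+K_S))=\pi_S$ and the image lies in $\mathbb{P}^{\pi_S-1}$.

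\emph{For Part (2)}, the upper bound comes from the Hodge index theorem applied to the ample class $H$ and to $K_S$: since $H^2>0$ one has $(H.K_S)^2\geq H^2\,K_S^2$, whence $K_S^2\leq (H.K_S)^2/H^2$ and, $K_S^2$ being an integer, $K_S^2\leq\lceil (H.K_S)^2/H^2\rceil$. For the lower bound I would use that $\varphi_{|H+K_S|}$ realizes $S_1$ as a nondegenerate surface in $\mathbb{P}^{\pi_S-1}$ (by Part (1)) of degree $(H+K_S)^2$, since $H+K_S=\varphi^{*}H_1$ for the hyperplane class $H_1$ of $S_1$ and $\varphi$ is birational. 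The minimal-degree bound $\deg\geq\codim+1$ then gives $(H+K_S)^2\geq(\pi_S-1)-1=\pi_S-2$; expanding $(H+K_S)^2=H.(H+2K_S)+K_S^2$ yields exactly $K_S^2\geq\pi_S-2-H.(H+2K_S)$.

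\emph{For Parts (3) and (4)} the key observation is that $\varphi$ contracts precisely the $(-1)$-curves $E$ with $(H+K_S).E=H.E+K_S.E=1-1=0$, so $H+K_S=\varphi^{*}H_1$ and $\varphi^{*}H_1.E=0$ for each contracted $E$. Combining this with the blow-down formula $K_S=\varphi^{*}K_{S_1}+\sum E$ and the projection formula gives $(H+K_S).K_S=\varphi^{*}H_1.(\varphi^{*}K_{S_1}+\sum E)=H_1.K_{S_1}$, which is Part (3). Part (4) is then a short genus computation: from $H_1^2=(H+K_S)^2$ and $H_1.K_{S_1}=(H+K_S).K_S$,
\[
\pi_{S_1}-\pi_S=\tfrac{1}{2}\big[(H+K_S)^2+(H+K_S).K_S-H.(H+K_S)\big]=(H+K_S).K_S,
\]
using $(H+K_S)^2-H.(H+K_S)=(H+K_S).K_S$.

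\emph{Part (5) is the crux}, and the only step I expect to need an idea rather than bookkeeping. Arguing by contrapositive, I would assume $K_S^2\geq 0$ and first show $-K_S$ is effective: Riemann--Roch gives $\chi(\mathscr{O}_S(-K_S))=1+K_S^2\geq 1$, while Serre duality together with the vanishing of the plurigenus $h^0(\mathscr{O}_S(2K_S))=0$ on a rational surface forces $h^2(\mathscr{O}_S(-K_S))=0$; hence $h^0(\mathscr{O}_S(-K_S))\geq 1$, and $-K_S$ is a nonzero effective divisor (it cannot be linearly trivial, as $h^0(\mathscr{O}_S(K_S))=0\neq h^0(\mathscr{O}_S)$). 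Adjunction then gives $2p_a(-K_S)-2=(-K_S)^2+(-K_S).K_S=K_S^2-K_S^2=0$, so $-K_S$ is a curve of arithmetic genus exactly $1$. Finally Proposition \ref{lowgenPrp} applies to this low-genus curve and yields $H.(-K_S)\geq 2\cdot 1+1=3$, i.e. $H.K_S\leq -3<-2$, contradicting $H.K_S\geq -2$. The main obstacle is exactly this recognition that the anticanonical class is forced to be an effective curve of genus one, after which the paper's own low-degree bound closes the argument immediately.
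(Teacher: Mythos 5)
Your proposal is correct and follows essentially the same route as the paper's proof: Riemann--Roch plus Kodaira vanishing for (1), the Hodge index inequality and the minimal-degree bound for nondegenerate surfaces for (2), contraction of the $(-1)$-curves $E$ with $(H+K_S).E=0$ for (3) and (4), and the contrapositive argument producing an effective anticanonical curve of arithmetic genus $1$ for (5). If anything, your treatment of (5) is slightly more careful than the paper's, since you make the assumption $K_S^2\geq 0$ explicit and correctly attribute the degree bound $H.(-K_S)\geq 3$ to Proposition \ref{lowgenPrp} rather than to the adjunction formula alone.
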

\begin{proof}
(1). Combine Riemann-Roch and the adjunction formula to get $\chi(S,\mathscr{O}_S(H+K_S))=\pi_S$.
The very ampleness of $H$ and smoothness of $S$ implies that $H^1(S,\mathscr{O}_S(H+K_S))=0$, due to
the Kodaira vanishing theorem. Furthermore, $H^2(S,\mathscr{O}_S(H+K_S))=0$ follows from the rationality
of $S$.\\
(2). The upper bound for $K_S^2$ is a direct consequence of the Hodge index inequality.
The lower bound for $K_S^2$ is obtained by noting that the non-degeneracy of $S$ yields that $\codim(S,\mathbb{P}^{\pi_S-1})+1\leq (H+K_S)^2$.\\
(3). Note that $(H+K_S).E=0$ for all $(-1)$-curves $E$ such that $K_S.E=-1$ and $H.E=1$. The equality now follows by recalling that
$S_1$ is obtained by blowing down every such $(-1)$-curves $E$.\\
(4). Apply the adjunction formula twice and then use Lemma \ref{adjLem}.3.\\
(5). Riemann-Roch yields that $h^0(\mathscr{O}_S(-K))=1+K_S^2+h^1(\mathscr{O}_S(-K_S))>0$. A curve $C\in |-K_S|$ has $p_a(C)=1$, such that
the adjunction formula implies that $H.C=-H.K>2.$ 
\end{proof}

We will also make use of the following result about surfaces of low degree in $\mathbb{P}^4$ due to Alexander.

\begin{thm}\label{alexList} Let $S$ be a linearly normal smooth rational surface $S$ embedded in $\mathbb{P}^4$. If the degree of $S$ is $\leq 9$ and $S$ is non-special, then the embedding complete linear system $H$ is:
\begin{center}
\begin{tabular}{l | l | l}
$\deg S$ &  $S$ & $H$\\
\hline
$3$ & $\tilde{\mathbb{P}}^2(x_1)$ & $2L-E_1$ \\
$4$ & $\tilde{\mathbb{P}}^2(x_1,\hdots , x_5)$ & $3L-E_1-\hdots -E_5$\\
$5$ & $\tilde{\mathbb{P}}^2(x_1,\hdots , x_8)$ & $4L-2E_1-E_2-\hdots- E_9$\\
$6$ & $\tilde{\mathbb{P}}^2(x_1,\hdots , x_{10})$ & $4L-E_1-\hdots -E_{10}$\\
$7$ & $\tilde{\mathbb{P}}^2(x_1,\hdots , x_{11})$ & $6L-2E_1-\hdots -2E_6-E_7-\hdots -E_{11}$\\
$8$ & $\tilde{\mathbb{P}}^2(x_1,\hdots , x_{11})$ & $7L-2E_1-\hdots -2E_{10}-E_{11}$\\
$9$ & $\tilde{\mathbb{P}}^2(x_1,\hdots , x_{10})$ & $13L-4E_1-\hdots -4E_{10}$\\
\end{tabular}
\end{center}
\end{thm}
\begin{proof}
See Theorem 1.1 in \cite{2}. 
\end{proof}

To be able to determine the configuration of the points blown up to obtain a smooth rational surface, we will be needing
the following result.

\begin{lem}\label{effLem} 
Let $H$ and $B$ be effective divisors on a smooth surface $S$ and denote $A\equiv H-B$. Suppose
$h^1(\mathscr{O}_S(H))+\chi(\mathscr{O}_S(A))>0$, suppose $h^2(\mathscr{O}_S(A))=0$ and suppose
$H.B>2p_a(B)-2$. Then $A$ is an effective divisor on $S$. 
\end{lem}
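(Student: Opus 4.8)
The plan is to recognize this as the abstract form of the ``nice decomposition'' argument already carried out inside the proof of Proposition \ref{liftEx1}. Since $A\equiv H-B$ with $B$ effective, multiplication by a section of $\mathscr{O}_S(B)$ cutting out $B$ exhibits $\mathscr{O}_S(A)$ as a subsheaf of $\mathscr{O}_S(H)$, yielding the short exact sequence
$$0\longrightarrow\mathscr{O}_S(A)\longrightarrow\mathscr{O}_S(H)\longrightarrow\mathscr{O}_B(H)\longrightarrow 0.$$
All the work is then to control the cohomology of the three terms and read off that $h^0(\mathscr{O}_S(A))>0$.

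First I would dispose of the curve term $\mathscr{O}_B(H)$. The hypothesis $H.B>2p_a(B)-2$ says that $\deg\mathscr{O}_B(H)$ exceeds $\deg\omega_B=2p_a(B)-2$, where the latter equality comes from adjunction on $B$, namely $\omega_B\simeq\mathscr{O}_B(K_S+B)$ so that $\deg\omega_B=(K_S+B).B=2p_a(B)-2$. Serre duality on the Gorenstein curve $B$ then gives $h^1(\mathscr{O}_B(H))=h^0(\omega_B\otimes\mathscr{O}_B(-H))=0$, the dual sheaf having negative degree.

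Next I would run the long exact cohomology sequence. The vanishing $h^1(\mathscr{O}_B(H))=0$ forces the natural map $H^1(\mathscr{O}_S(A))\longrightarrow H^1(\mathscr{O}_S(H))$ induced by the inclusion to be surjective, so $h^1(\mathscr{O}_S(A))\geq h^1(\mathscr{O}_S(H))$. I then invoke Riemann--Roch on $S$ in the form $h^0(\mathscr{O}_S(A))=\chi(\mathscr{O}_S(A))+h^1(\mathscr{O}_S(A))-h^2(\mathscr{O}_S(A))$, and the hypothesis $h^2(\mathscr{O}_S(A))=0$ removes the last term. Combining the two facts,
$$h^0(\mathscr{O}_S(A))=\chi(\mathscr{O}_S(A))+h^1(\mathscr{O}_S(A))\geq\chi(\mathscr{O}_S(A))+h^1(\mathscr{O}_S(H))>0,$$
where the final strict inequality is exactly the assumption $h^1(\mathscr{O}_S(H))+\chi(\mathscr{O}_S(A))>0$. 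Hence $|A|$ is non-empty and $A$ is effective on $S$.

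The only delicate step is the vanishing $h^1(\mathscr{O}_B(H))=0$: for $B$ irreducible it is immediate from Riemann--Roch on a curve, but if $B$ is reducible the bare inequality $H.B>2p_a(B)-2$ controls only the total degree, and one must still rule out sections of $\omega_B\otimes\mathscr{O}_B(-H)$ — most safely by verifying that this sheaf has negative degree on every subcurve of $B$, or by restricting to the numerically connected situations in which the lemma is actually applied. Everything else is a formal diagram chase together with Riemann--Roch, and the three hypotheses have plainly been arranged so that precisely the terms one needs survive.
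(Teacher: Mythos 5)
Your proof is correct and is essentially the paper's own argument: the same short exact sequence, the same vanishing $h^1(\mathscr{O}_B(H))=0$, the resulting inequality $h^1(\mathscr{O}_S(A))\geq h^1(\mathscr{O}_S(H))$, and the Euler-characteristic count using $h^2(\mathscr{O}_S(A))=0$ (the paper merely splits off the trivial case $\chi(\mathscr{O}_S(A))>0$ first). Your closing caveat is also apt: the paper justifies $h^1(\mathscr{O}_B(H))=0$ with a bare ``by Riemann--Roch,'' which, as you observe, is automatic only for irreducible $B$ and in general requires a degree condition on subcurves (or numerical connectedness), a point the paper glosses over.
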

\begin{proof}

The result is clear if $\chi(\mathscr{O}_S(A))>0$, due to $h^2(\mathscr{O}_S(A))=0$. So suppose $\chi(\mathscr{O}_S(A))\leq 0$.
The assumption $H.B>2p_a(B)-2$ implies that $h^1(\mathscr{O}_B(H))=0$, by Riemann-Roch. Taking cohomology of the short exact sequence
$$0\longrightarrow\mathscr{O}_S(A)\longrightarrow\mathscr{O}_S(H)\longrightarrow\mathscr{O}_B(H)\longrightarrow 0$$
we obtain $h^1(\mathscr{O}_S(A))\geq h^1(\mathscr{O}_S(H))$. Then $h^1(\mathscr{O}_S(H))+\chi(\mathscr{O}_S(A))>0$
implies that $h^0(\mathscr{O}_S(A))>0$. \end{proof}

We are now ready to prove the converse statement in Theorem \ref{mainT}. Note that the converse statement is about $\mathbb{P}^2$ blown up in distinct points, since Theorem \ref{alexList} is about distinct points.

\begin{thm}\label{kuttThm}
Suppose there exists a linearly normal smooth rational surface $S$ of degree $11$
and sectional genus $8$ embedded in $\mathbb{P}^5$. If $i:S\hookrightarrow\mathbb{P}^5$
is an embedding and $i^*\mathscr{O}_{\mathbb{P}^5}(1)$ is the very ample line bundle associated to 
$i$, then the associated very ample divisor $H$ of $i^*\mathscr{O}_{\mathbb{P}^5}(1)$
belongs to the following divisor classes:\begin{center}
\begin{tabular}{l | l | l}
$K_S^2.$ & Type & $H$.\\
\hline
& & \\
$-10$ & $[(4,4-2e);2^1,1^{17}]$ & $ 4B+(4-2e)F-2E_1-E_2-\hdots -E_{18}$\\
$-9$ & $[(4,5-2e);2^4,1^{13}]$ & $ 4B+(5-2e)F-2E_1-\hdots -2E_4-E_5-\hdots -E_{17}$\\
$-8$ & $[(4,6-2e);2^7,1^{9}]$ & $ 4B+(6-2e)F-2E_1-\hdots -2E_7-E_8-\hdots -E_{16}$\\
$-8$ & $[7;2^7,1^{10}]$ & $ 7L-2E_1-\hdots -2E_{7}-E_8-\hdots -E_{17}$\\
$-7$ & $[9;3^6,2^2,1^8]$ & $ 9L-3E_1-\hdots -3E_6-2E_7-2E_8-E_9-\hdots -E_{16}$\\
\end{tabular}
\end{center}
\end{thm}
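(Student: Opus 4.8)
The plan is to treat $K_S^2$ as the single free parameter and let the adjunction theory of Theorem \ref{adj} and Lemma \ref{adjLem} reduce the classification to a finite search. First I would fix the numerics. The adjunction formula $2\pi_S-2=H.(H+K_S)$ together with $H^2=11$ and $\pi_S=8$ forces $H.K_S=3$, and since $H.K_S=3\geq -2$, part (5) of Lemma \ref{adjLem} gives $K_S^2<0$. I would then verify that $S$ is none of the exceptional cases of Theorem \ref{adj}: it is neither a plane nor a Veronese surface and is not ruled by lines, since those have the wrong degree or force $H.K_S<0$; the three sporadic families (i)--(iii) all have degree different from $11$; and by Lemma \ref{adjLem}(1) we have $h^0(\mathscr{O}_S(H+K_S))=\pi_S=8$, so the image of $\varphi_{|H+K_S|}$ is a non-degenerate surface in $\mathbb{P}^7$, which rules out case (2), where the image would be a curve or a point. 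Hence $\varphi_{|H+K_S|}$ is a birational morphism onto a smooth, linearly normal surface $S_1\subset\mathbb{P}^{\pi_S-1}=\mathbb{P}^7$ that contracts disjoint $(-1)$-lines, and Lemma \ref{adjLem}(3)--(4) gives the adjoint invariants $(H+K_S)^2=17+K_S^2$ and $\pi_{S_1}=11+K_S^2$.

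Next I would bound $K_S^2$. Non-degeneracy of $S_1$ in $\mathbb{P}^7$ gives $\codim(S_1,\mathbb{P}^7)+1=6\leq (H+K_S)^2=17+K_S^2$, so $K_S^2\geq -11$; the borderline $K_S^2=-11$ makes $S_1$ a smooth surface of minimal degree $6$, hence a rational normal scroll ruled by lines with $\pi_{S_1}=0$, and reversing the contraction shows that no very ample $H$ of the required type arises, so in fact $K_S^2\geq -10$. For the upper bound I would iterate the adjunction, each step being governed by Lemma \ref{adjLem}; the chain must terminate at a minimal rational model, which is either $\mathbb{P}^2$ or a Hirzebruch surface $\mathbb{F}_e$. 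Carrying $H^2=11$ and $H.K_S=3$ back along the successive blow-downs shows that admissible solutions occur only for finitely many values of $K_S^2$, and in fact only for $-10\leq K_S^2\leq -6$; this is exactly what organizes the candidate list into the ``$[(a,b);\ldots]$'' types over $\mathbb{F}_e$ and the ``$[d;\ldots]$'' types over $\mathbb{P}^2$.

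For each admissible $K_S^2$ I would reconstruct $H$ explicitly. Knowing the minimal model and that $S$ is it blown up at $9-K_S^2$ points (over $\mathbb{P}^2$) or $8-K_S^2$ points (over $\mathbb{F}_e$), I would solve $H^2=11$ and $H.K_S=3$ for the multiplicities of $H$ along $L$ (respectively $B,F$) and along the exceptional divisors, using Lemma \ref{effLem} to certify that the required classes are effective. This produces a finite list of candidate divisor classes, including a spurious entry at $K_S^2=-6$ and an extra $\mathbb{P}^2$-based entry at $K_S^2=-10$. I would then prune: Proposition \ref{lowgenPrp} discards any candidate carrying a curve $C$ with $p_a(C)\leq 2$ and $H.C\leq 2p_a(C)$, while the lifting Propositions \ref{liftEx1} and \ref{liftEx2} remove the $K_S^2=-6$ class $10L-4E_1-3E_2-\cdots$ and the $\mathbb{P}^2$-based $K_S^2=-10$ class $6L-2E_1-2E_2-\cdots$ respectively, leaving precisely the five classes of the table and the final range $-10\leq K_S^2\leq -7$.

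The main obstacle is the joint enumeration-and-reconstruction step: controlling how many and which $(-1)$-curves each adjunction contracts, so as both to pin down the range of $K_S^2$ and to recover the exact multiplicity vectors of $H$. In particular the upper bound $K_S^2\leq -7$ is the delicate point, since the numerically admissible class at $K_S^2=-6$ passes every cohomological test and is eliminated only by the lifting argument of Proposition \ref{liftEx1}; the crux there is recognizing in advance which nice decomposition $H\equiv A+B$ to feed into that argument so that a section lifts and forces a forbidden plane curve.
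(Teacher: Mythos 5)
Your overall skeleton matches the paper's: compute $H.K_S=3$ from the adjunction formula, get $K_S^2<0$ from Lemma \ref{adjLem}(5), iterate the adjunction map of Theorem \ref{adj}, rule out $K_S^2=-11$ via surfaces of minimal degree, and finish by pruning with Proposition \ref{lowgenPrp} and the two lifting Propositions \ref{liftEx1} and \ref{liftEx2}. Your identification of the two classes killed by the lifting propositions, namely $[10;4^1,3^7,2^1,1^6]$ at $K_S^2=-6$ and $[6;2^2,1^{17}]$ at $K_S^2=-10$, is also correct.

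The genuine gap is the claim that carrying $H^2=11$ and $H.K_S=3$ back along the blow-downs ``shows that admissible solutions occur only for \dots $-10\leq K_S^2\leq -6$.'' That is false: the adjunction numerology admits candidate classes for every value of $K_S^2$ from $-10$ up to $-1$. The paper's Part 1 enumerates them explicitly --- for instance $[24;8^5,7^5]$ at $K_S^2=-1$, $[14;5^1,4^{10}]$ at $-2$, $[12;4^5,3^5,2^2]$ at $-3$, $[12;4^6,3^3,2^2,1^2]$ at $-4$, $[9;3^4,2^8,1^2]$ at $-5$ --- and the iteration must run up to seven adjunction steps (the case $(K_0^2,\hdots,K_6^2)=(-1,\hdots,-1)$) while branching at each stage on the $(H_i+K_i)^2=0$ alternative of Theorem \ref{adj} (Del Pezzo surface or conic bundle), which is precisely where all the scroll types $[(4,\cdot);\ldots]$ in the final list come from; your picture of a chain terminating ``at a minimal model'' misses these branches. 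Eliminating the roughly thirty candidates with $-5\leq K_S^2\leq -1$ (and several others) is not numerology: these classes are perfectly consistent with $H^2=11$, $\pi_S=8$ and very ampleness (a non-special such $H$ would embed $S$ in $\mathbb{P}^4$, since $\chi(\mathscr{O}_S(H))=5$). What kills them is linear normality in $\mathbb{P}^5$, which forces $h^1(\mathscr{O}_S(H))=1$; the paper then uses Lemma \ref{effLem}, applied to an explicitly chosen nice decomposition $H\equiv A+B$ for each candidate, to conclude that $A$ is effective with $p_a(A)\leq 2$ and $H.A\leq 2p_a(A)$, contradicting Proposition \ref{lowgenPrp}. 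Your proposal invokes Proposition \ref{lowgenPrp} as ``discarding any candidate carrying'' such a curve, but never supplies the mechanism that produces the curve --- effectivity is exactly where speciality enters, and it requires a case-by-case choice of decomposition --- so as written the entire reduction from the full candidate list to the five classes of the theorem is missing.
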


\begin{proof}
We proceed with the proof in two parts. First part, we use Theorem \ref{adj} and Lemma \ref{adjLem}
to produce divisor classes $H$ belongs to. Second part, we show that
all but the divisor classes stated in the theorem cannot both be very ample and special.\\
\\
\textit{Part 1 of the proof.}\\
\\
Let $S_i$ be a smooth rational surface and let $\varphi_i:S_i\hookrightarrow\mathbb{P}^{M}$ be an embedding
such that $\mathscr{O}_{S_i}(H_i)\simeq\varphi_i^*\mathscr{O}_{\mathbb{P}^{M}}(1)$. Denote $H_{i+1}$ as the adjoint
divisor of $H_i$ on $S_i$, that is $H_{i+1}:=H_i+K_i$ where $K_i:=K_{S_i}$. Furthermore, let
$\varphi_{i+1}:=\varphi_{|H_i+K_i|}$ whenever $\varphi_{i+1}$ is a birational morphism such that
$\varphi_{i+1}(S_i):=S_{i+1}$ and $\pi_i:=\pi_{H_i}$. The idea now is to use Theorem \ref{adj}
repeatedly to obtain sequences of birational morphisms
\begin{center}
\begin{tabular}{l l l l l l l l l}
$S$ & $\overset{\varphi}{\longrightarrow}$ & $S_1$ & $\overset{\varphi_1}{\longrightarrow}$ & $S_2$
& $\overset{\varphi_2}{\longrightarrow}$ & $ \cdots$ & $\overset{\varphi_{N_0-1}}{\longrightarrow}$ & $S_{N_0}$
\end{tabular}
\end{center}
where each surface $S_{i+1}$ is embedded into $\mathbb{P}^{\pi_i-1}$ due to Lemma \ref{adjLem}.1. Furthermore, note that whenever $\pi_{i} \leq 5$ the surface $S_{i+1}$ is embedded into at most a $\mathbb{P}^4$. The main idea then is that whenever $\pi_i\leq 5$ we may use Alexander's classification, Theorem \ref{alexList}, of surfaces in $\mathbb{P}^4$ or classification of surfaces in $\mathbb{P}^3$ to determine $H_i$. In which case, we may reproduce $H_0:=H\equiv H_i-K_{i-1}-K_{i-2}-\hdots- K_0$. Therefore, our strategy is to determine the minimum number of birational morphisms such that $\pi_i\leq 5$. So, we set
$$N_0:=\min\{ i\:|\: \pi_i\leq 5\}.$$ 

Note that by determining $N_0$ , invariants $(K_0^2,K_1^2,...,K_{N_0}^2)$, $\deg (S_{N_0})$ and $\pi_{N_0-1}$ we may reproduce the complete linear systems $H_0$. First of all, note that Lemma \ref{adjLem}.2 and Lemma \ref{adjLem}.5 yields that $$-11 \leq K_0^2\leq -1.$$ The rest of part 1 will be divided into cases depending on the values of $K_0^2$. \\
\\
\textit{Case 1:} $-11\leq K_0^2\leq -6$.
\\
Using Lemma \ref{adjLem}.4 we have $\pi_1=11+K_0^2$ such that $N_0=2$ if and only if $-11 \leq K_0^2\leq -6$. 
Furthermore, it follows from Lemma \ref{adjLem}.3 that $H_2^2=23+3K_0^2+K_1^2$. We subdivide this case into whether $H_2^2=(H_1+K_1)^2=0$ or not and then make use of Theorem \ref{adj}.\\
\\
\textit{Case 1.1:} Suppose $H_2^2=0$.\\
It is then straightforward,
by Lemma \ref{adjLem}.5, to check that $(K_0^2,K_1^2)$ takes the following values:
\begin{center}
\begin{tabular}{l l l l l}
$(-10,7)$ & $(-9,4)$ & $(-8,1)$ & $(-7,-2)$ & $(-6,-5)$
\end{tabular}
\end{center}
\noindent By Case 2 of Theorem \ref{adj}, $S_1$ is a Del Pezzo surface
or a conic bundle. 

If $S_1$ is a Del Pezzo surface, then the divisor class of
$H_0$ is of the following form $$H_0\equiv -K_0-K_1\equiv 6L-2E_1-\hdots -2E_{9-K_1^2}-E_{10-K_1^2}-\hdots -E_{9-K_0^2}.$$
Then Lemma \ref{binomLem}.1 implies that $\pi_0=8$ if and only if $K_1^2=7$.
So, if $S_1$ is a Del Pezzo surface then $H_0$ is of type $[6;2^2,1^{17}]$. 

If $S_1$ is a conic bundle, then $H_1\equiv -K_{1}$  so that
$$H_1\equiv 2B+aF-E_1-\hdots -E_{8-K_1^2}$$ for some $a\in\mathbb{Z}_{\geq 0}$, 
since $S_1$ has $8-K_1^2$ singular fibres and where $B^2=e$. Then it follows from $H_1.K_1=3+K_0^2$ that $2a=1-2e-K_0^2-K_1^2$. Using the latter we reproduce $H_0$ by determining $a$ for each value $(K_0^2,K_1^2)$ takes. This gives us:

\begin{center}
\begin{tabular}{l c l}
$(K_0^2,K_1^2)$ & $a$ & Type of $H_0$\\
\hline
$(-10,7)$ & $2-e$ & $[(4,4-2e)_2;2^1,1^{17}]$\\
$(-9,4)$ & $3-e$ & $[(4,5-2e)_3;2^4,1^{13}]$\\
$(-8,1)$ & $4-e$ & $[(4,6-2e)_4;2^7,1^{9}]$\\
$(-7,-2)$ & $5-e$ & $[(4,7-2e)_5;2^{10},1^{5}]$\\
$(-6,-5)$ & $6-e$ & $[(4,8-2e)_6;2^{13},1^1]$\\
\end{tabular}
\end{center}

As an example, we consider the case $(K_0^2,K_1^2)=(-7,-2)$. In this particular case, we obtain
$a=5-e$ from the relation $2a=1-2e-K_0^2-K_1^2$. So we may reproduce $H_0$.
\begin{eqnarray*}
H_0 & \equiv & H_1-K_0\\
& \equiv & 2B+(5-e)F-E_1-\hdots - E_{10}-\left(-2B+(e-2)F+E_1+\hdots +E_{15}\right)\\
& \equiv & 4B+(7-2e)F-2E_1-\hdots -2E_{10}-E_{11}-\hdots -E_{15}.
\end{eqnarray*}
\\
\noindent\textit{Case 1.2:} Suppose $H_2^2>0$.\\
Clearly $H_1$ does not belong to any of the four families stated in Case $3$ of Theorem \ref{adj} since
that would yield $\pi_0<8$ or $H_0^2<11$. Thus we may assume that $\varphi_1:S_1\longrightarrow S_2$
is a birational morphism and that $S_2$ is a smooth rational surface. Then Lemma \ref{adjLem}.2 yields the following
values for $(K_0^2,K_1^2)$.

\begin{center}
\begin{tabular}{l l l l l l l}
$(-11,K_1^2)$ & $(-8,2)$ & $(-7,1)$ & $(-7,0)$ & $(-6,0)$ & $(-6,-1)$ & $(-6,-2)$
\end{tabular}
\end{center}

Suppose $K_0^2=-11$, then $S_1$ is a surface of minimal degree in $\mathbb{P}^7$ since $\deg(S_1)=6$.
This implies that $S_1$ is a Veronese surface or a rational normal scroll.
If $S_1$ is a Veronese surface then $H_1\equiv 2L$, which contradicts $H_0^2=11$. If $S_1$ is a
rational normal scroll then $H_1\equiv B+(\alpha -e)F$, where $0\leq e<\alpha$. 
Recall that every minimal degree $d$ satisfies $d=2\alpha -e$ by Corollary
IV.2.19 in \cite{11}. An exhaustion of the pairs $(\alpha,e)$ satisfying the latter
relation yields no divisor classes $H_0$ of degree $11$. So we may rule out the case $(-11,K_1^2)$.\\ 
% *** DONE ***: Changed non-special rational surfaces to 'rational surfaces with speciality zero'.

For the six remaining pairs of $(K_0^2,K_1^2)$ we reproduce $H_0$ by using classifications of 
smooth rational surfaces in $\mathbb{P}^n$ for $n\leq 4$. In particular, when $n=4$ then we use Theorem \ref{alexList}. Then we get the following divisor classes of $H_0$ when $S_2\subset\mathbb{P}^{\pi_1-1}$. 

\begin{center}
\begin{tabular}{l c c l l}
$(K_0^2,K_1^2)$ & $\deg(S_2)$ & $\pi_1$ & $H_2$ & Type of $H_0$\\
\hline
$(-8,2)$ & $1$ & $3$ & $L$ & $[7;2^7,1^{10}]$\\ 
$(-7,0)$ & $2$ & $4$ & $2L-E_1-E_2$ & $[8;3^2,2^7,1^7]$\\ 
$(-7,1)$ & $3$ & $4$ & $3L-E_1-\hdots-E_6$ & $[9;3^6,2^2,1^8]$\\
$(-6,-2)$ & $3$ & $5$ & $2L-E_1$ & $[8;3^1,2^{10},1^4]$\\
$(-6,-1)$ & $4$ & $5$ & $4L-2E_1-E_2-\hdots -E_8$ & $[9;3^5,2^5,1^5]$\\
$(-6,0)$ & $5$ & $5$ & $4L-E_1-\hdots -E_{10}$ & $[10;4^1,3^7,2^1,1^6]$\\
\end{tabular}
\end{center}

As an example, we consider the case $(K_0^2,K_1^2)=(-8,2)$. Note that we examined this case in Theorem \ref{constEx1} In this particular case, we have $H_2^2=23+3K_0^2+K_1^2=1$ and $\pi_1=11+K_0^2=3$. So, $S_2$ is embedded as a surface of degree $1$ in $\mathbb{P}^2$. That is, $S_2$ is the projective plane. Since the divisor $L$ is associated to the embedding of a $\mathbb{P}^2$ in $\mathbb{P}^2$, we have $H_2\equiv L$. Then we reproduce $H_0$ by reversing the adjoint process.
\begin{eqnarray*}
H_0 & \equiv & H_1-K_0\\
 & \equiv & H_2-K_1-K_0\\
 & \equiv &  L-\left(-3L+E_1+\hdots+ E_7 \right)-\left(-3L+E_1+\hdots E_{17}\right)\\
& \equiv & 7L-2E_1-\hdots -2E_7-E_8-\hdots -E_{17}.
\end{eqnarray*}

This concludes Case 1.\\

\noindent\textit{Case 2:} $-5\leq K_0^2\leq -4$.
\\
\noindent Recall that Case 1 considered the cases of $-11\leq K_0^2\leq -6$ and note that if $K_0^2\geq -5$
then Lemma \ref{adjLem}.5 applies, due to $H_1.K_1\geq -2$, such that we have $K_1^2\leq -1$. Therefore, by using Lemma \ref{adjLem}.3 and Lemma \ref{adjLem}.4, we see that $N_0=3$ occurs whenever
$-5\leq K_0^2\leq -4$.\\
\\
\textit{Case 2.1:} Suppose $H_3^2=0$.\\
Then $K_0^2\leq K_1^2\leq K_2^2$ and Lemma \ref{adjLem}.2 yields the following
possibilities for $(K_0^2,K_1^2,K_2^2)$. 

\begin{center}
\begin{tabular}{l l l l l l}
$(-5,-1,-1)$ & $(-5,-2,2)$ & $(-5,-3,5)$ & $(-5,-4,8)$ & $(-4,-3,0)$ & $(-4,-4,3)$
\end{tabular}
\end{center} 

\noindent If $S_2$ is a Del Pezzo surface, then the adjunction formula yields that $\pi_0=1+K_1^2+2K_2^2$.
So $\pi_0=8$ occurs only in the case $(-5,-3,5)$ such that $H_0$ is of type $[9;3^4,2^8,1^2]$.

If $S_2$ is a conic bundle, then $H_2\equiv 2B+aF-E_1-\hdots -E_{8-K_2^2}$
for some $a\in\mathbb{Z}_{\geq 0}$. Then $H_2.K_2=3+K_0^2+K_1^2$ gives us $2a=1-2e-\sum_1^3K_i^2$, so that
we may reproduce $H_0$.

\begin{center}
\begin{tabular}{l c l}
$(K_0^2,K_1^2,K_2^2)$ & $a$ & Type of $H_0$\\
\hline
$(-5,-1,-1)$ & $4-e$ & $[(6,8-3e)_4;3^9,1^4]$\\
$(-5,-2,2)$ & $3-e$ & $[(6,7-3e)_3;3^6,2^4,1^3]$\\
$(-5,-3,5)$ & $2-e$ & $[(6,6-3e)_2;3^3,2^8,1^2]$\\
$(-5,-4,8)$ & $1-e$ & $[(6,5-3e)_1;2^{12},1^1]$\\
$(-4,-3,0)$ & $4-e$ & $[(6,8-3e)_4;3^{8},2^3,1^1]$\\
$(-4,-4,3)$ & $3-e$ & $[(6,7-3e)_3;3^{5},2^7]$\\
& &
\end{tabular}
\end{center}
\noindent\textit{Case 2.2:} Suppose $H_3^2>0$.\\
\noindent Clearly,
$H_0$ does not belong to any of the four families of Case 3 in Theorem \ref{adj}.
So we may assume that $\varphi_3:S_2\longrightarrow S_3$ is a birational morphism.
By ruling out the cases when $1\leq \pi_2 \leq 3$, we obtain the following 
possibilities for $(K_0^2,K_1^2,K_2^2)$.\\
\\
\begin{center}
\begin{tabular}{l l l l l}
$(-5,-4,K_2^2)$ & $(-5,-1,0)$ & $(-4,-3,2)$ & $(-4,-2,-1)$ & $(-4,-2,0)$
\end{tabular}
\end{center}

\noindent Suppose $(K_0^2,K_1^2)=(-5,-4)$. Then $S_2$ is a surface of minimal degree in $\mathbb{P}^5$. 
If $S_2$ is a Veronese surface, then $H_2\equiv 2L$ which yields that $H_0$ is of type
$[8;2^{13},1^1]$. If $S_2$ is a minimal rational scroll, then $H_2\equiv B+(\alpha-e)F$ where 
$0\leq e<\alpha$ and $4=2\alpha-e$.
An investigation of the pairs $(\alpha,e)$ reveal that $H_0^2=11$ is not true.
For the remaining triples $(K_0^2,K_1^2,K_2^2)$ we
may now reconstruct $H_0$.

\begin{center}
\begin{tabular}{l c c l l}
$(K_0^2,K_1^2,K_2^2)$ & $\deg(S_3)$ & $\pi_2$ & $H_3$ & Type of $H_0$\\
\hline
$(-5,-1,0)$ & $1$ & $3$ & $L$ & $[10;3^9,2^1,1^4]$\\ 
$(-4,-3,1)$ & $1$ & $3$ & $L$ & $[10;3^8,2^5,1^1]$\\ 
$(-4,-2,-1)$ & $2$ & $4$ & $2L-E_1-E_2$ & $[11;4^2,3^8,2^1,1^2]$\\
$(-4,-2,0)$ & $3$ & $4$ & $3L-E_1-\hdots -E_6$ & $[12;4^6,3^3,2^2,1^2]$\\
\end{tabular}
\end{center}
 
This concludes Case 2.\\
\\
\noindent\textit{Case 3:} $K_0^2=-3$.\\
\noindent Then we have $-3\leq K_1^2\leq -1$ and $K_1^2\leq K_2^2$. Note that the case $(-3,-3,K_2^2)$ 
satisfies $\pi_2=5$.  Furthermore, every other case satisfies $H_2.K_2\geq -2$ such that $K_2^2\leq -1$. In particular this means that $N_0=4$ whenever $K_0^2=-3$.\\

\noindent \textit{Case 3.1:} $H_4^2=0$.\\
Then $(K_0^2,K_1^2,K_2^2,K_3^2)$ takes the following values.

\begin{center}
\begin{tabular}{l l}
$(-3,-2,-2,2)$ & $(-3,-2,-1,-1)$
\end{tabular}
\end{center}

\noindent If $S_3$ is a Del Pezzo surface, then $\pi_0=5+\sum_1^3 i\cdot K_i^2$ implies that none of the tuples
above satisfy $\pi_0=8$. If $S_3$ is a conic bundle and $H_3.F=a$, then $2a=1-2e-\sum_0^3 K_i^2$ yields the following
possibilites for $H_0$.

\begin{center}
\begin{tabular}{l c l}
$(K_0^2,K_1^2,K_2^2,K_3^3)$ & $a$ & Type of $H_0$\\
\hline
$(-3,-2,-2,2)$ & $3-e$ & $[(8,9-4e)_3;4^6,3^{4},1^1]$\\
$(-3,-2,-1,-1)$ & $4-e$ & $[(8,10-4e)_4;4^9,2^1,1^1]$\\
& &
\end{tabular}
\end{center}

\noindent \textit{Case 3.2:} $H_4^2>0$.\\
\noindent Clearly, none of the four families
in Case 3 in Theorem \ref{adj} occur. So we may assume $\varphi_4:S_3\longrightarrow S_4$ is a birational
morphism. Then we may reproduce $H_0$.

\begin{center}
\begin{tabular}{c l c c l l}
$N_0$ & $(K_0^2,\hdots, K_{N_0-1}^2)$ & $\deg(S_{N_0})$ & $\pi_{N_0-1}$ & $H_{N_0}$ & Type of $H_0$\\
\hline
$3$ & $(-3,-3,-2)$ & $3$ & $5$ & $2L-E_1$ & $[11;4^1,3^{10},2^1]$\\ 
$3$ & $(-3,-3,-1)$ & $4$ & $5$ & $3L-E_1-\hdots -E_5$ & $[12;4^5,3^5,2^2]$\\ 
$3$ & $(-3,-3,0)$ &  $5$ & $5$ & $4L-2E_1-E_2-\hdots -E_8$ & $[13;5^1,4^7,3^1,2^3]$\\
$4$ & $(-3,-2,-1,0)$ & $1$ & $3$ & $L$ & $[13;4^9,3^1,2^1,1^1]$\\
$4$ & $(-3,-2,-1,-1)$ & $5$ & $5$ & $4L-2E_1-E_2-\hdots -E_8$ & $[16;6^1,5^7,4^2,3^1,2^1]$\\
\end{tabular}
\end{center}

This concludes Case 3.\\
\\
\noindent\textit{Case 4:} $-2\leq K_0^2\leq -1$.\\
\noindent First, 
suppose $K_0^2=-2$. Then $-2\leq K_1^2\leq -1$ and $K_1^2\leq K_2^2\leq -1$.
In the case $(-2,-2,-2)$ we obtain $\pi_3=5$, that is $N_0=4$.  In the remaining cases
$H_3.K_3\geq -2$ implies that $K_2\leq K_3\leq -1$ such that $K_2^2=K_3^2=-1$.
In the case $(-2,-2,-1,-1)$ we obtain $\pi_4<5$, that is $N_0=5$. The remaining
case $(-2,-2,-1,-1)$ yields that $K_3^2\leq K_4^2\leq 0$ in which case $\pi_5<5$, that is
$N_0=6$.

Next, suppose $K_0^2=-1$. Then $H_i.K_i\geq -2$, for $1\leq i\leq 5$, which implies that
$K_0=\hdots =K_5=-1$. It then follows that $N_0=7$.\\
\\
\textit{Case 4.1:} $H_{N_0}^2=0$, whenever $5\leq N_0\leq 7$.\\
Then we obtain the following choices for $(K_0^2,\hdots ,K_{N_0}^2)$.

\begin{center}
\begin{tabular}{l l}
$(-2,-2,-1,-1,-1)$ & $(-2,-2,-1,-1,-1,8)$
\end{tabular}
\end{center}

\noindent It is clear that $S_{N_0}$ can not be a Del Pezzo surface for neither of the
tuples above. If $S_{N_0}$ is a conic bundle, where $H_{N_0-1}.F=a$, then by using
$2a=1-2e-\sum K_i^2$ we may reconstruct $H_0$.

\begin{center}
\begin{tabular}{c l c l}
$N_0$ & $(K_0^2,\hdots, K^2_{N_0})$ & $a$ & Type of $H_0$\\
\hline
$5$ & $(-2,-2,-1,-1,-1)$ & $4-e$ & $[(10,12-5e)_3;5^9,2^1]$\\
$6$ & $(-2,-2,-1,-1,-1,8)$ & $4-e$ & $[(12,10-6e)_3;5^{9},2^1]$\\
& & &
\end{tabular}
\end{center}

\noindent \textit{Case 4.2:} $H_{N_0}^2>0$, whenever $5\leq N_0\leq 7$.\\
Clearly, none of the four families
in Case $3$ in Theorem \ref{adj} occur such that we may assume $\varphi_{N_0}:S_{N_0-1}\longrightarrow S_{N_0}$
is a birational morphism. Then $(K_0^2,\hdots ,K_{N_0}^2)$ takes the following values.

\begin{center}
\begin{tabular}{l l l l l}
$(-2,-2,-2,0)$ & $(-2,-2,-2,-1)$ & $(-2,-2,-2,-2)$ & $(-2,-2,-1,-1,0)$\\
$(-2,-1,-1,-1,-1,-1)$ & $(-1,-1,-1,-1,-1,-1,0)$ & $(-1,-1,-1,-1,-1,-1,-1)$ &
\end{tabular}
\end{center}

\noindent Reconstructing $H_0$ is each of the cases above, we obtain the following.

\begin{center}
\begin{tabular}{c l c c l l}
$N_0$ & $(K_0^2,\hdots, K_{N_0-1}^2)$ & $\deg(S_{N_0})$ & $\pi_{N_0-1}$ & $H_{N_0}$ & Type of $H_0$\\
\hline
$4$ & $(-2,-2,-2,,0)$ & $3$ & $5$ & $2L-E_1$ & $[14;5^1,4^{10}]$\\ 
$4$ & $(-2,-2,-2,-1)$ & $4$ & $5$ & $3L-E_1-\hdots -E_5$ & $[15;5^5,4^5,3^1]$\\ 
$4$ & $(-2,-2,-2,-2)$ &  $5$ & $5$ & $4L-2E_1-E_2-\hdots -E_8$ & $[16;6^1,5^7,4^1,3^2]$\\
$5$ & $(-2,-2,-1,-1,0)$ & $1$ & $3$ & $L$ & $[16;5^9,4^1,2^1]$\\
$6$ & $(-2,-1,-1,-1,-1,-1)$ & $1$ & $3$ & $L$ & $[19;6^9,5^1,1^1]$\\
$7$ & $(-1,-1,-1,-1,-1,-1,0)$ & $4$ & $5$ & $3L-E_1-\hdots -E_5$ & $[24;8^5,7^5]$\\
$7$ & $(-1,-1,-1,-1,-1,-1,-1)$ & $5$ & $5$ & $4L-2E_1-E_2-\hdots -E_8$ & $[25;9^1,8^7,7^1,6^1]$\\
\end{tabular}
\end{center}

\noindent This exhausts all possibilities for $K_0^2$ and therefore concludes the first part of the proof.\\
\\
\textit{Part 2 of the proof}.\\
\\
\noindent The idea now is to show that every divisor class obtained in Part 1 of the proof, except
the divisor classes in the statement of the Theorem, cannot simultaneously be both very ample and have six global
sections on $S$. We show this by finding an explicit decomposition $H\equiv A+B$, where
$A$ will be effective by applying Lemma \ref{effLem} and the numerical invariants of $A$ will contradict
Proposition \ref{lowgenPrp}.\\

\begin{center}
\begin{tabular}{l | l | c | c | c | c | c | c}
Type of $H$. & Type of $A$. & $\chi(\mathscr{O}_S(A))$ & $p_a(A)$ & $H.A$ & $\chi(\mathscr{O}_S(B))$ & $p_a(B)$ & $H.B$\\
& & & & & & &\\
\hline
$[25;9^1,8^7,7^1,6^1]_1$ & $[9;3^8,2^2]$ & $1$ & $2$ & $4$ & $-$ & $-$ & $-$\\
$[24;8^5,7^5]$ & $[8;3^5,2^5]$ & $0$ & $1$ & $2$ & $3$ & $5$ & $9$\\
$[19;6^9,5^1,1^1]$ & $[9;3^8,2^2,1^1]$ & $0$ & $2$ & $4$ & $2$ & $3$ & $7$\\
$[16;5^9,4^1,2^1]_2$ & $[6;2^9,1^1]$ & $0$ & $1$ & $2$ & $3$ & $5$ & $9$\\
$[16;6^1,5^7,4^1,3^2]$ & $[7;3^1,2^{10}]$ & $0$ & $2$ & $4$ & $2$ & $3$ & $7$\\
$[15;5^5,4^5,3^1]$ & $[5;2^5,1^6]$ & $0$ & $1$ & $2$ & $3$ & $5$ & $9$\\
%$[14;5^1,4^{10}]$ & $[7;3^1,2^{10}]$ & $0$ & $2$ & $3$ & $3$ & $4$ & $8$\\
%$[(12,10-6e);5^9,2^1]_3$ & $[(2,2-e);1^9]$ & $0$ & $1$ & $2$ & $3$ & $5$ & $9$\\
%$[(10,12-5e);5^9,2^1]_3$ & $[(2,2-e);1^9]$ & $0$ & $1$ & $2$ & $3$ & $5$ & $9$\\
%$[16;6^1,5^7,4^2,1^2]$ & $[6;2^8,1^4]$ & $0$ & $2$ & $4$ & $2$ & $3$ & $7$\\
%$[13;4^9,3^1,2^1,1^1]$ & $[6;2^8,1^4]$ & $0$ & $2$ & $4$ & $2$ & $3$ & $7$\\
%$[13;5^1,4^7,3^1,2^3]$ & $[6;2^8,1^4]$ & $0$ & $2$ & $3$ & $3$ & $4$ & $8$\\
%$[12;4^5,3^5,2^2]$ & $[6;2^8,1^4]$ & $0$ & $2$ & $4$ & $2$ & $3$ & $7$\\
%$[11;4^1,3^{10},2^1]_{2}$ & $[3;1^{10}]$ & $0$ & $1$ & $2$ & $3$ & $5$ & $9$\\
%$[(8,10-4e);4^9,2^1,1^1]_3$ & $[(2,2-e);1^9]$ & $0$ & $1$ & $2$ & $3$ & $5$ & $9$\\
%$[(8,9-4e);4^6,3^4,1^1]_2$ & $[(2,3-e);2^1,1^9]$ & $0$ & $1$ & $2$ & $3$ & $5$ & $9$\\
%$[12;4^6,3^3,2^2,1^2]$ & $[6;2^8,1^4]$ & $0$ & $2$ & $4$ & $2$ & $3$ & $7$\\
%$[11;4^2,3^8,2^1,1^2]$ & $[4;2^1,1^{12}]$ & $0$ & $2$ & $4$ & $2$ & $3$ & $7$\\
%$[10;3^8,2^5,1^1]$ & $[4;2^1,1^{13}]$ & $0$ & $2$ & $4$ & $2$ & $3$ & $7$\\
%$[10;3^9,2^1,1^4]_{2,4}$ & $[1;1^2]$ & $1$ & $0$ & $4$ & $0$ & $4$ & $7$\\
%\end{tabular}
%\end{center}
%\begin{center}
%\begin{tabular}{l | l | c | c | c | c | c | c}
%Type of $H$. & Type of $A$. & $\chi(\mathscr{O}_S(A))$ & $p_a(A)$ & $H.A$ & $\chi(\mathscr{O}_S(B))$ & $p_a(B)$ & $H.B$\\
%& & & & & & &\\
%\hline
$[14;5^1,4^{10}]$ & $[7;3^1,2^{10}]$ & $0$ & $2$ & $3$ & $3$ & $4$ & $8$\\
$[(12,10-6e);5^9,2^1]_3$ & $[(2,2-e);1^9]$ & $0$ & $1$ & $2$ & $3$ & $5$ & $9$\\
$[(10,12-5e);5^9,2^1]_3$ & $[(2,2-e);1^9]$ & $0$ & $1$ & $2$ & $3$ & $5$ & $9$\\
$[16;6^1,5^7,4^2,1^2]$ & $[6;2^8,1^4]$ & $0$ & $2$ & $4$ & $2$ & $3$ & $7$\\
$[13;4^9,3^1,2^1,1^1]$ & $[6;2^8,1^4]$ & $0$ & $2$ & $4$ & $2$ & $3$ & $7$\\
$[13;5^1,4^7,3^1,2^3]$ & $[6;2^8,1^4]$ & $0$ & $2$ & $3$ & $3$ & $4$ & $8$\\
$[12;4^5,3^5,2^2]$ & $[6;2^8,1^4]$ & $0$ & $2$ & $4$ & $2$ & $3$ & $7$\\
$[11;4^1,3^{10},2^1]_{2,4}$ & $[3;1^{10}]$ & $0$ & $1$ & $2$ & $3$ & $5$ & $9$\\
$[(8,10-4e);4^9,2^1,1^1]_3$ & $[(2,2-e);1^9]$ & $0$ & $1$ & $2$ & $3$ & $5$ & $9$\\
$[(8,9-4e);4^6,3^4,1^1]_2$ & $[(2,3-e);2^1,1^9]$ & $0$ & $1$ & $2$ & $3$ & $5$ & $9$\\
$[12;4^6,3^3,2^2,1^2]$ & $[6;2^8,1^4]$ & $0$ & $2$ & $4$ & $2$ & $3$ & $7$\\
$[11;4^2,3^8,2^1,1^2]_4$ & $[4;2^1,1^{12}]$ & $0$ & $2$ & $4$ & $2$ & $3$ & $7$\\
$[10;3^8,2^5,1^1]$ & $[4;2^1,1^{13}]$ & $0$ & $2$ & $4$ & $2$ & $3$ & $7$\\
$[10;3^9,2^1,1^4]_{2,5}$ & $[1;1^2]$ & $1$ & $0$ & $4$ & $0$ & $4$ & $7$\\
$[8;2^{13},1^1]_2$ & $[4;2^1,1^{12}]$ & $0$ & $2$ & $4$ & $3$ & $3$ & $7$\\
$[(6,7-3e)_3;3^5,2^7]$ & $[(3,4-\frac{3}{2}e);2^4,1^8]$ & $0$ & $2$ & $4$ & $2$ & $3$ & $7$\\
$[(6,8-3e)_4;3^8,2^3,1^{1}]_2$ & $[(2,2-e);1^9]$ & $0$ & $1$ & $2$ & $3$ & $5$ & $9$\\
$[(6,5-3e)_1;2^{12},1^1]_2$ & $[(2,3-e);1^{12}]$ & $0$ & $2$ & $4$ & $2$ & $3$ & $7$\\
$[(6,6-3e)_2;3^3,2^8,1^{2}]_2$ & $[(2,3-e);1^{12}]$ & $0$ & $2$ & $4$ & $2$ & $3$ & $7$\\
$[(6,7-3e)_2;3^6,2^4,1^{3}]_2$ & $[(2,3-e);1^{12}]$ & $0$ & $2$ & $4$ & $2$ & $3$ & $7$\\
$[(6,8-3e)_2;3^9,1^{4}]_2$ & $[(2,3-e);1^{12}]$ & $0$ & $2$ & $4$ & $2$ & $3$ & $7$\\
$[9;3^4,2^8,1^2]_{2,4}$ & $[2;1^5]$ & $1$ & $0$ & $4$ & $0$ & $4$ & $7$\\
$[9;3^5,2^5,1^5]_2$ & $[3;1^{10}]$ & $0$ & $1$ & $2$ & $3$ & $5$ & $9$\\
$[8;3^1,2^{10},1^4]_2$ & $[4;2^1,1^{12}]$ &  $0$ & $2$ & $4$ & $2$ & $3$ & $7$\\
$[8;3^2,2^7,1^7]_{2,5}$ & $[2;1^5]$ & $1$ & $0$ & $4$ & $0$ & $4$ & $7$\\
$[(4,8-2e);2^{13},1^1]_2$ & $[(2,3-e);1^{12}]$ & $0$ & $2$ & $4$ & $2$ & $3$ & $7$\\
$[(4,7-2e);2^{10},1^5]_2$ & $[(2,3-e);1^{12}]$ & $0$ & $2$ & $4$ & $2$ & $3$ & $7$\\
\end{tabular}
\end{center}

% ***DONE*** What does base change in subscript 4 mean? Show the map Pic(F_0)-->Pic(P^2).
\noindent Note that we have written subscripts on several of the divisor classes in the table above. 

Subscript $1$ means that combining $\chi(\mathscr{O}_S(A))$ with the
rationality of $S$ implies that $A$ is an effective divisor contradicting Proposition \ref{lowgenPrp}. Subscript $2$ means that the divisor $A$ has to be chosen relative
to the ordering $i\geq j$ if and only if $A.E_i\geq A.E_j$. Subscript $3$ occurs in three cases and means that $A.E_9=0$.  Subscript $4$ means that Lemma \ref{effLem} implies that $B$ is an effective divisor, since
$\chi(\mathscr{O}_S(B))=0$ and $H.A>2p_a(A)-2$. Then it follows from
$H.B>2p_a(B)-2$ and $A^2>2p_a(A)-2$ that $h^1(\mathscr{O}_S(H))=1$ is false, due to Lemma \ref{forsLem}.

 Now we illustrate how one may use the table above to obtain a contradiction for every divisor class without subscript $1$ or $4$. For instance, if $H$ is of type $[24;8^5,7^5]$ then $A$ is of type $[8;3^5,2^5]$ and $B$ is of type $[16;5^{10}]$. Since $\chi(\mathscr{O}_S(A))=0$ and $H.B>2p_a(B)-2$, Lemma \ref{effLem} yields that $A$ is effective whenever
$H$ is special. But this contradicts Proposition \ref{lowgenPrp} since $p_a(A)=2$ and $H.A\leq 2p_a(A)$. This in turn contradicts the very ampleness of $H$.\\
\\
\noindent Taking into account Theorem \ref{constEx1} and the lifting examples, namely Proposition \ref{liftEx1} and Proposition \ref{liftEx2}, this concludes the proof.  \end{proof}

\vskip10pt
\noindent\textbf{Acknowledgements.} Many of the results in this article were obtained during my master's thesis at the University of Oslo. 
I would like to thank my thesis supervisor Kristian Ranestad for interesting discussions and for continuous encouragement both during and after my studies.

%\vskip10pt
%\section{References}
%\bibliographystyle{alpha}

\end{document}